\newtheorem{theorem}{Theorem}[section]
\newtheorem{corollary}{Corollary}[theorem]
\theoremstyle{definition}
\newtheorem{definition}{Definition}[section]
\newtheorem{remark}{Remark}[section]
\newtheorem{example}{Example}[section]
\begin{document}

\title{Transfers of metabelian \(p\)-groups}

\author{Daniel C. Mayer}
\address{Naglergasse 53\\8010 Graz\\Austria}
\email{algebraic.number.theory@algebra.at}
\urladdr{http://www.algebra.at}
\thanks{Research supported by the
Austrian Science Fund,
Grant Nr. J0497-PHY}

\subjclass[2000]{Primary 20D15, 20F12, 20F14; Secondary 11R29, 11R11}
\keywords{Metabelian \(p\)-groups of maximal class, transfers of \(2\)-groups,
tree of metabelian \(3\)-groups of non-maximal class,
principalisation of \(p\)-class groups, quadratic base fields}

\date{November 15, 2010}

\begin{abstract}
Explicit expressions for the transfers \(\mathrm{V}_i\)
from a metabelian \(p\)-group \(G\) of coclass \(\mathrm{cc}(G)=1\)
to its maximal normal subgroups \(M_i\) \((1\le i\le p+1)\)
are derived by means of relations for generators.
The expressions for the exceptional case \(p=2\) differ significantly
from the standard case of odd primes \(p\ge 3\).
In both cases the transfer kernels \(\mathrm{Ker}(\mathrm{V}_i)\) are calculated
and the principalisation type of the metabelian \(p\)-group is determined,
if \(G\) is realised as the Galois group \(\mathrm{Gal}(\mathrm{F}_p^2(K)\vert K)\)
of the second Hilbert \(p\)-class field \(\mathrm{F}_p^2(K)\)
of an algebraic number field \(K\).
For certain metabelian \(3\)-groups \(G\)
with abelianisation \(G/G^{\prime}\) of type \((3,3)\)
and of coclass \(\mathrm{cc}(G)=r\ge 3\),
it is shown that the principalisation type determines
the position of \(G\) on the coclass graph \(\mathcal{G}(3,r)\)
in the sense of Eick and Leedham-Green.
\end{abstract}

\maketitle

%--------------------------------------------------------------------------------

\section{Introduction}
\label{s:Intro}

In \cite{Ma}
we have used the theory of dihedral fields of degree \(2p\) with a prime \(p\ge 3\)
to show for a quadratic base field \(K=\mathbb{Q}(\sqrt{D})\)
with \(p\)-class group \(\mathrm{Cl}_p(K)=\mathrm{Syl}_p(\mathrm{Cl}(K))\) of type \((p,p)\)
and second \(p\)-class group \(G=\mathrm{Gal}(\mathrm{F}_p^2(K)\vert K)\) of maximal class
that the entire \(p\)-class group of \(K\) becomes principal
in at least \(p\) unramified cyclic extension fields
\(N_i\vert K\) of relative degree \(p\).

In the present paper we generalise this result
for an arbitrary base field \(K\)
with \(p\)-class group \(\mathrm{Cl}_p(K)\) of type \((p,p)\)
and second \(p\)-class group \(G\) of maximal class.
The proof consists of
computing the images and kernels of the transfers \(\mathrm{V}_i\)
from \(G\) to its maximal normal subgroups \(M_i\) with \(1\le i\le p+1\)
and thus determining the principalisation type of \(K\).
Our statements are mainly expressed in group theoretical form,
using transfer types instead of principalisation types,
since the realisation of the \(p\)-group \(G\)
as Galois group \(\mathrm{Gal}(\mathrm{F}_p^2(K)\vert K)\)
of the second Hilbert \(p\)-class field of an algebraic number field \(K\),
though being fundamental for the number theoretical applications,
is inessential for proving our results.

Section \ref{s:MaxTrf} is devoted to
the metabelian \(p\)-groups \(G\) of maximal class
for an arbitrary prime \(p\ge 2\).
A \(p\)-group \(G\) 
of order \(\lvert G\rvert=p^m\) and nilpotency class \(\mathrm{cl}(G)=m-1\)
is called of maximal class or of coclass \(\mathrm{cc}(G)=m-\mathrm{cl}(G)=1\).
That is a CF-group
with first factor \(G/\gamma_2(G)\) of type \((p,p)\)
and cyclic further factors
\(\gamma_j(G)/\gamma_{j+1}(G)\) of order \(p\) for \(2\le j\le m-1\),
where we denote by \(\gamma_j(G)\) the members of the lower central series of \(G\).
Relations for the generators \(x,y\)
of a metabelian \(p\)-group \(G=\langle x,y\rangle\) of maximal class
have been given by N. Blackburn \cite{Bl} and
more generally by R. J. Miech \cite{Mi}
and are recalled at the beginning of section \ref{s:MaxTrf}.

In subsection \ref{ss:MaxTrmTrf}
we derive explicit expressions for the transfers
\(\mathrm{V}_i:G/\gamma_2(G)\to M_i/\gamma_2(M_i)\)
from \(G\) to the maximal self-conjugate subgroups \(M_1,\ldots,M_{p+1}\)
with the aid of commutator calculus and the presentations given by Miech.
The expressions obtained for the exceptional case \(p=2\) differ significantly
from the uniform standard case of odd primes \(p\ge 3\).

Subsection \ref{ss:TypTrf}
introduces the group theoretical concepts of
singulets and multiplets of transfer types,
which describe the transfer kernels.
In the intermediate subsection \ref{ss:ExtPrc}
we use the reciprocity law of E. Artin \cite{Ar1,Ar2}
to show how the group theoretical statements
can be applied to algebraic number fields \(K\)
with \(p\)-class group \(\mathrm{Cl}_p(K)\) of type \((p,p)\).
The \textit{second \(p\)-class group} of such a field \(K\),
that is the Galois group \(G=\mathrm{Gal}(\mathrm{F}_p^2(K)\vert K)\)
of the second Hilbert \(p\)-class field \(\mathrm{F}_p^2(K)\) of \(K\),
is a metabelian \(p\)-group
with abelianisation \(G/\gamma_2(G)\) of type \((p,p)\)
\cite{Ma}.
In the exceptional case \(p=2\) it is always of maximal class.

By the computation of the transfer kernels \(\mathrm{Ker}(\mathrm{V}_i)\) of \(G\)
we determine the principalisation type of the
\(p\)-class group \(\mathrm{Cl}_p(K)\) of \(K\)
in the \(p+1\) unramified cyclic extensions \(N_i\vert K\)
of relative degree \(p\) in subsection \ref{ss:MaxKerTrf}.
For \(p\ge 5\), our results are very similar to those of
B. Nebelung \cite[p.202, Satz 6.9]{Ne} for \(p=3\).
However, in the exceptional case \(p=2\)
our precise principalisation types
permit more explicit statements than H. Kisilevsky \cite[p.273, Th.2]{Ki2}
or E. Benjamin and C. Snyder \cite[p.163, \S 2]{BeSn},
since the cohomology
\(\mathrm{H}^0(\mathrm{Gal}(N_i\vert K),\mathrm{Cl}_p(N_i))\)
of the \(p\)-class groups \(\mathrm{Cl}_p(N_i)\)
with respect to the relative Galois groups \(\mathrm{Gal}(N_i\vert K)\) \cite{Ki1}
only yields the coarse distinction
of the conditions (A) and (B) in the sense of O. Taussky \cite[p.435]{Ta}.

Transfer images and kernels of
metabelian \(3\)-groups \(G\)
with abelianisation \(G/\gamma_2(G)\) of type \((3,3)\)
and of coclass \(\mathrm{cc}(G)\ge 2\)
are investigated in section \ref{s:LowTrf},
based on presentations given by Nebelung \cite{Ne}.

In subsection \ref{ss:LowKerTrf}
we show for certain \(3\)-groups \(G\) of coclass \(\mathrm{cc}(G)=r\ge 3\)
that it can be decided
by means of the parity of the index of nilpotency of the second \(3\)-class group \(G\)
of a quadratic number field \(K=\mathbb{Q}(\sqrt{D})\),
if \(G\) is represented by a terminal or an internal node
on the finitely many directed and rooted trees of isomorphism classes
of metabelian \(3\)-groups of coclass \(r\) with abelianisation of type \((3,3)\)
in the sense of Nebelung \cite[p.181 ff]{Ne},
which are infinite subgraphs of the coclass graph \(\mathcal{G}(3,r)\)
in the sense of Ascione, Leedham-Green, et al. \cite{As,LgMk,EiLg,DEF}.

At the end of sections \ref{s:MaxTrf} and \ref{s:LowTrf}
we present a complete overview of all transfer types
of metabelian \(p\)-groups
with abelianisation of type \((p,p)\)
for \(p=2\) and \(p=3\).

%--------------------------------------------------------------------------------

\section{Transfers of a metabelian \(p\)-group of maximal class}
\label{s:MaxTrf}

With an arbitrary prime \(p\ge 2\),
let \(G\) be a metabelian \(p\)-group
of order \(\lvert G\rvert=p^m\) and
nilpotency class \(\mathrm{cl}(G)=m-1\), where \(m\ge 3\).
Then \(G\) is of maximal class
and the commutator factor group \(G/\gamma_2(G)\) of \(G\) is of type \((p,p)\)
\cite{Bl,Mi}.
The lower central series of \(G\) is defined
recursively by \(\gamma_1(G)=G\) and
\(\gamma_j(G)=\lbrack\gamma_{j-1}(G),G\rbrack\) for \(j\ge 2\).

The centraliser
\(\chi_2(G)
=\lbrace g\in G\mid\lbrack g,u\rbrack\in\gamma_4(G)\text{ for all }u\in\gamma_2(G)\rbrace\)
of the two-step factor group \(\gamma_2(G)/\gamma_4(G)\), that is,
\[\chi_2(G)/\gamma_4(G)
=\mathrm{Centraliser}_{G/\gamma_4(G)}(\gamma_2(G)/\gamma_4(G))\,,\]
is the biggest subgroup of \(G\) such that
\(\lbrack\chi_2(G),\gamma_2(G)\rbrack\le\gamma_4(G)\).
It is characteristic, contains the commutator group \(\gamma_2(G)\), and
coincides with \(G\), if and only if \(m=3\).
Let the isomorphism invariant \(k=k(G)\) of \(G\) be defined by
\[\lbrack\chi_2(G),\gamma_2(G)\rbrack=\gamma_{m-k}(G)\,,\]
where \(k=0\) for \(m=3\), \(0\le k\le m-4\) for \(m\ge 4\),
and \(0\le k\le\min\lbrace m-4,p-2\rbrace\) for \(m\ge p+1\),
according to Miech \cite[p.331]{Mi}.

Suppose that generators of \(G=\langle x,y\rangle\) are selected such that
\(x\in G\setminus\chi_2(G)\), if \(m\ge 4\), and \(y\in\chi_2(G)\setminus\gamma_2(G)\).
We define the main commutator
\(s_2=\lbrack y,x\rbrack\in\gamma_2(G)\)
and the higher commutators
\(s_j=\lbrack s_{j-1},x\rbrack=s_{j-1}^{x-1}\in\gamma_j(G)\) for \(j\ge 3\).
In the sequel,
we only need two relations for \(p\)th powers of the generators \(x\) and \(y\) of \(G\),
when we calculate explicit images of the transfers of \(G\),

\begin{equation}
\label{eqn:MaxRel}
x^p=s_{m-1}^w\quad\text{ and }\quad
y^p\prod_{\ell=2}^p\,s_\ell^{\binom{p}{\ell}}=s_{m-1}^z
\quad \text{ with exponents }\quad 0\le w,z\le p-1\,,
\end{equation}

\noindent
according to Miech \cite[p.332, Th.2, (3)]{Mi}.
Blackburn uses the notation \(\delta=w\) and \(\gamma=z\)
for these relational exponents \cite[p.84, (36),(37)]{Bl}.

Additionally, the group \(G\) satisfies
relations for \(p\)th powers of the higher commutators,
\[s_{j+1}^p\prod_{\ell=2}^p\,s_{j+\ell}^{\binom{p}{\ell}}=1\quad\text{ for }1\le j\le m-2\,,\]
and the main commutator relation of Miech \cite[p.332, Th.2, (2)]{Mi},

\begin{equation}
\label{eqn:MaxComRel}
\lbrack y,s_2\rbrack=\prod_{r=1}^k s_{m-r}^{a(m-r)}
\in\lbrack\chi_2(G),\gamma_2(G)\rbrack=\gamma_{m-k}(G)\,,
\end{equation}

\noindent
with exponents \(0\le a(m-r)\le p-1\) for \(1\le r\le k\), \(a(m-k)>0\).
Blackburn restricts his investigations to \(k\le 2\) and uses the notation
\(\beta=a(m-1)\) and \(\alpha=a(m-2)\) \cite[p.82, (33)]{Bl}.

By \(G_a^{(m)}(z,w)\) we denote 
the representative of an isomorphism class of
metabelian \(p\)-groups \(G\) of maximal class
and of order \(\lvert G\rvert=p^m\),
which satisfies the relations
(\ref{eqn:MaxComRel}) and (\ref{eqn:MaxRel})
with a fixed system of exponents
\(a=(a(m-k),\ldots,a(m-1))\) and \(w\) and \(z\).

The maximal normal subgroups \(M_i\) of \(G\)
contain the commutator group \(\gamma_2(G)\) of \(G\)
as a normal subgroup of index \(p\) and thus
are of the shape \(M_i=\langle g_i,\gamma_2(G)\rangle\).
We define an order by
\(g_1=y\) and \(g_i=xy^{i-2}\) for \(2\le i\le p+1\).
The commutator groups \(\gamma_2(M_i)\)
are of the general form
\(\gamma_2(M_i)=\langle s_2,\ldots,s_{m-1}\rangle^{g_i-1}\),
according to \cite[Cor.3.1.1]{Ma},
and in particular

\begin{equation}
\label{eqn:MaxComGrp}
\begin{array}{rcl}
\gamma_2(M_1)&=&
\begin{cases}
1,&\text{ if }k=0,\\
\gamma_{m-k}(G),&\text{ if }k\ge 1,
\end{cases}\\
\gamma_2(M_i)&=&\gamma_3(G)\quad\text{ for }2\le i\le p+1.
\end{array}
\end{equation}

For an arbitrary fixed index \(1\le i\le p+1\)
we select an element \(h\in G\setminus M_i\)
and denote the \(p\)th \textit{trace element} (\textit{Spur}) of \(h\)
in the group ring \(\mathbb{Z}\lbrack G\rbrack\) by
\(\mathrm{S}_p(h)=\sum_{\ell=1}^p\,h^{\ell-1}\).

Taking into consideration that \(G^p<\gamma_2(G)<M_i\),
the \textit{transfer} (\textit{Verlagerung}) \(\mathrm{V}_i=\mathrm{V}_{G,M_i}\)
from \(G\) to \(M_i\) is given by

\begin{equation}
\label{eqn:MaxTrf}
\mathrm{V}_i:G/\gamma_2(G)\longrightarrow M_i/\gamma_2(M_i),
\ g\gamma_2(G)\mapsto
\begin{cases}
g^p\gamma_2(M_i),&\text{ if }g\in G\setminus M_i,\\
g^{\mathrm{S}_p(h)}\gamma_2(M_i),&\text{ if }g\in M_i,
\end{cases}
\end{equation}

\noindent
according to E. Artin \cite[p.50]{Ar2},
H. Hasse \cite[p.171, VII]{Ha2},
or K. Miyake \cite[p.296 ff]{My}.

%--------------------------------------------------------------------------------

\subsection{Images of the transfers}
\label{ss:MaxTrmTrf}

Since the elementary abelian bicyclic \(p\)-group \(G\) of type \((p,p)\)
represents a degenerate case of the metabelian \(p\)-groups of maximal class,
which cannot be excluded generally
in the subsequent number theoretical applications,
we begin by determining its transfers.

\begin{theorem}
\label{t:AblTrmTrf}

With an arbitrary prime \(p\ge 2\),
let \(G\) be the elementary abelian bicyclic \(p\)-group of type \((p,p)\)
and order \(\lvert G\rvert=p^m\), \(m=2\)
with its \(p+1\) cyclic subgroups
as maximal normal subgroups \(M_1,\ldots,M_{p+1}\).
Let
\[\mathrm{V}_i:G/\gamma_2(G)\longrightarrow M_i/\gamma_2(M_i),
\ g\gamma_2(G)\mapsto\mathrm{V}_i(g\gamma_2(G))\]
be the transfer from \(G\) to \(M_i\) for \(1\le i\le p+1\).

Then the images of all transfers are trivial,
\[\mathrm{V}_i(g\gamma_2(G))=1\quad\text{ for }g\in G\text{ and }1\le i\le p+1\,.\]

\end{theorem}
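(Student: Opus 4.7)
The plan is to unwind the transfer formula (\ref{eqn:MaxTrf}) directly, using that the hypothesis $m=2$ forces $G$ to be abelian and of exponent $p$. The proof will have essentially two cases, corresponding to the two branches of (\ref{eqn:MaxTrf}).

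First I would record the structural simplifications imposed by $m=2$. Since $G$ is abelian, one has $\gamma_2(G)=1$, and each maximal subgroup $M_i$ is itself cyclic (hence abelian), so $\gamma_2(M_i)=1$ as well. Consequently the transfer map $\mathrm{V}_i$ is literally a map $G\to M_i$, and triviality of its image reduces to showing $\mathrm{V}_i(g)=1$ in $M_i$ for every $g\in G$. Moreover, since $G$ is elementary abelian of exponent $p$, every $g\in G$ satisfies $g^p=1$.

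Next I would treat the two cases of (\ref{eqn:MaxTrf}) separately. If $g\in G\setminus M_i$, the formula gives $\mathrm{V}_i(g\gamma_2(G))=g^p\gamma_2(M_i)=1$ directly from $g^p=1$. If instead $g\in M_i$, pick any $h\in G\setminus M_i$; then $\mathrm{V}_i(g\gamma_2(G))=g^{\mathrm{S}_p(h)}\gamma_2(M_i)$, where $\mathrm{S}_p(h)=\sum_{\ell=1}^p h^{\ell-1}\in\mathbb{Z}[G]$. Because $G$ is abelian, conjugation by every $h^{\ell-1}$ acts trivially on $g$, so $g^{h^{\ell-1}}=g$ for each $\ell$; hence $g^{\mathrm{S}_p(h)}=g^p=1$. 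Both cases therefore yield $\mathrm{V}_i(g\gamma_2(G))=1$.

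No genuine obstacle is expected: the statement is essentially the degenerate case in which every ingredient of the transfer machinery collapses. The only small point worth checking carefully is the second case, where one must notice that even though $\mathrm{S}_p(h)$ is a sum of $p$ distinct group elements in $\mathbb{Z}[G]$, the abelian structure of $G$ collapses its action on $M_i$ to multiplication by $p$; this is the place where the proof uses abelianness as opposed to just $\gamma_2(M_i)=1$.
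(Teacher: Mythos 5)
Your proposal is correct and follows essentially the same route as the paper: both use that \(G\) has exponent \(p\) and that \(\gamma_2(M_i)=1\), reduce the two branches of formula (\ref{eqn:MaxTrf}), and in the second branch collapse \(g^{\mathrm{S}_p(h)}\) to \(g^p=1\) via trivial conjugation in the abelian group. Your remark that abelianness (not merely \(\gamma_2(M_i)=1\)) is what makes \(g^{h^{\ell-1}}=g\) is a point the paper leaves implicit, but the argument is otherwise identical.
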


\begin{proof}

Like any elementary abelian \(p\)-group, \(G\) is of exponent \(p\).
Further, all commutator groups \(\gamma_2(M_i)=1\) are trivial.
In formula (\ref{eqn:MaxTrf}),
with an arbitrary element \(h\in G\setminus M_i\),
we consequently obtain
the triviality of the explicit image of each transfer,
\[\mathrm{V}_i(g\gamma_2(G))=
\begin{cases}
g^p\gamma_2(M_i)=1,&\text{ if }g\in G\setminus M_i,\\
g^{\mathrm{S}_p(h)}\gamma_2(M_i)=g^{\sum_{\ell=1}^p\,h^{\ell-1}}
=\prod_{\ell=1}^p\,g^{h^{\ell-1}}=\prod_{\ell=1}^p\,g=g^p=1,&\text{ if }g\in M_i,
\end{cases}\]
for \(1\le i\le p+1\).
\end{proof}

After this degenerate case we treat the non-abelian \(p\)-groups \(G\)
of maximal class with abelian commutator group \(\gamma_2(G)\)
and commutator factor group \(G/\gamma_2(G)\) of type \((p,p)\)
for the uniform standard case of odd primes \(p\ge 3\).

\begin{theorem}
\label{t:MaxTrmTrf}

With an odd prime \(p\ge 3\),
let \(G=\langle x,y\rangle\) be a metabelian \(p\)-group of maximal class
of order \(\lvert G\rvert=p^m\), where \(m\ge 3\).
Suppose that the generators of \(G\) are selected such that
\(x\in G\setminus\chi_2(G)\), if \(m\ge 4\), \(y\in\chi_2(G)\setminus\gamma_2(G)\),
and the relations (\ref{eqn:MaxRel})
with exponents \(0\le w,z\le p-1\) are satisfied.
Let the maximal normal subgroups \(M_1,\ldots,M_{p+1}\) be ordered by
\(M_1=\langle y,\gamma_2(G)\rangle\) and
\(M_i=\langle xy^{i-2},\gamma_2(G)\rangle\) for \(2\le i\le p+1\).
Finally, let
\[\mathrm{V}_i:G/\gamma_2(G)\longrightarrow M_i/\gamma_2(M_i),
\ g\gamma_2(G)\mapsto\mathrm{V}_i(g\gamma_2(G))\]
denote the transfer from \(G\) to \(M_i\) for \(1\le i\le p+1\).

Assume that the cosets \(g\gamma_2(G)\in G/\gamma_2(G)\)
are represented in the shape
\(g\equiv x^jy^\ell\pmod{\gamma_2(G)}\) with \(0\le j,\ell\le p-1\),
then the images of the transfers are generally given by
\[\mathrm{V}_i(x^jy^\ell\gamma_2(G))=s_{m-1}^{wj+z\ell}\gamma_2(M_i)\quad\text{ for }1\le i\le p+1.\]
With the explicit form (\ref{eqn:MaxComGrp}) of the commutator groups \(\gamma_2(M_i)\), they are given by

\begin{eqnarray*}
\label{e:MaxTrfDtl}
\mathrm{V}_1(x^jy^\ell\gamma_2(G))&=&
\begin{cases}
s_{m-1}^{wj+z\ell}\cdot 1,&\text{ if }\lbrack\chi_2(G),\gamma_2(G)\rbrack=1,\ k=0,\ m\ge 3,\ p\ge 3,\\
1\cdot\gamma_{m-1}(G),&\text{ if }\lbrack\chi_2(G),\gamma_2(G)\rbrack=\gamma_{m-1}(G),\ k=1,\ m\ge 5,\ p\ge 3,\\
1\cdot\gamma_{m-k}(G),&\text{ if }\lbrack\chi_2(G),\gamma_2(G)\rbrack=\gamma_{m-k}(G),\ k\ge 2,\ m\ge 6,\ p\ge 5,
\end{cases}\\
\mathrm{V}_i(x^jy^\ell\gamma_2(G))&=&
\begin{cases}
s_2^{wj+z\ell}\cdot 1,&\text{ if }m=3,\\
1\cdot \gamma_3(G),&\text{ if }m\ge 4,
\end{cases}
\qquad\qquad\text{ for }2\le i\le p+1.
\end{eqnarray*}

\end{theorem}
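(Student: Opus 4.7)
The plan is to use that the transfer $\mathrm{V}_i\colon G/\gamma_2(G)\to M_i/\gamma_2(M_i)$ is a homomorphism of abelian groups, hence is determined by its values on the two generator classes $x\gamma_2(G)$ and $y\gamma_2(G)$. Since the defining formula (\ref{eqn:MaxTrf}) splits according to whether its argument lies in $M_i$ or not, I first record the elementary fact that, working in $G/\gamma_2(G)\cong(\mathbb{Z}/p)^2$, one has $x\in M_i$ iff $i=2$ and $y\in M_i$ iff $i=1$. Every other generator--index combination therefore falls into the simpler $g\mapsto g^p$ branch, which I dispose of first.

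The easy branch is handled directly by the power relations (\ref{eqn:MaxRel}). For $i\ge 3$ these give $\mathrm{V}_i(x\gamma_2(G))=x^p\gamma_2(M_i)=s_{m-1}^w\gamma_2(M_i)$, and for $i\ge 2$ they give $\mathrm{V}_i(y\gamma_2(G))=y^p\gamma_2(M_i)=s_{m-1}^z\gamma_2(M_i)$, provided the correction product $\prod_{\ell=2}^p s_\ell^{\binom{p}{\ell}}$ from (\ref{eqn:MaxRel}) is shown to lie in $\gamma_2(M_i)=\gamma_3(G)$. For $\ell\ge 3$ the base $s_\ell$ itself is in $\gamma_3(G)$; for $\ell=2$ the exponent $\binom{p}{2}=p(p-1)/2$ is a multiple of $p$ (here $p$ is odd), so $s_2^{\binom{p}{2}}$ is a power of $s_2^p$, which lies in $\gamma_3(G)$ by the stated power relation for the higher commutators.

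The trace branch is needed only for the two diagonal cases $\mathrm{V}_1(y\gamma_2(G))$ and $\mathrm{V}_2(x\gamma_2(G))$. For the first I take $h=x$, so that $\mathrm{V}_1(y\gamma_2(G))=\prod_{k=0}^{p-1}y^{x^k}\cdot\gamma_2(M_1)$. Using the metabelian hypothesis together with the identities $y^x=y\,s_2$ and $s_j^x=s_j\,s_{j+1}$, a Pascal-style induction on $k$ yields
\[y^{x^k}=y\cdot\prod_{j\ge 2}s_j^{\binom{k}{j-1}}.\]
Multiplying over $k=0,1,\dots,p-1$ and applying the hockey-stick identity $\sum_{k=0}^{p-1}\binom{k}{j-1}=\binom{p}{j}$ recovers exactly the left side of Miech's relation (\ref{eqn:MaxRel}), so the product collapses to $s_{m-1}^z$. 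Reducing modulo $\gamma_2(M_1)$ as given in (\ref{eqn:MaxComGrp}) then produces the three subcases of the $\mathrm{V}_1$ formula; the two $k\ge 1$ subcases are trivial because $s_{m-1}\in\gamma_{m-1}(G)\le\gamma_{m-k}(G)$.

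For $\mathrm{V}_2(x\gamma_2(G))$ I choose $h=y$; here the main commutator relation (\ref{eqn:MaxComRel}) is what ensures that conjugation by $y$ acts trivially on $\gamma_2(G)$ modulo $\gamma_{m-k}(G)\subseteq\gamma_3(G)$. A short induction then gives $x^{y^k}\equiv x\,s_2^{-k}\pmod{\gamma_3(G)}$, these cosets commute modulo $\gamma_3(G)=\gamma_2(M_2)$, and the trace product reduces to $x^p\,s_2^{-p(p-1)/2}\equiv s_{m-1}^w\pmod{\gamma_2(M_2)}$, the $s_2$-power being absorbed by the same $s_2^p\in\gamma_3(G)$ argument as above. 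Combining the two generator evaluations via the homomorphism property yields $\mathrm{V}_i(x^jy^\ell\gamma_2(G))=s_{m-1}^{wj+z\ell}\gamma_2(M_i)$ for every $i$, and substituting the explicit $\gamma_2(M_i)$ from (\ref{eqn:MaxComGrp}) produces the case-by-case statements. The main obstacle I anticipate is running the commutator induction for $y^{x^k}$ cleanly enough that the Pascal/hockey-stick combinatorics drop precisely onto the Miech relation (\ref{eqn:MaxRel}); once that step is in place, the rest is bookkeeping about when $s_{m-1}$ already lies in $\gamma_2(M_i)$.
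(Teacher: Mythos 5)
Your proposal is correct and follows essentially the same route as the paper: evaluate each transfer on the generators \(x,y\), split according to whether the generator lies in \(M_i\), use the power relations together with \(p\mid\binom{p}{2}\) and \(s_2^p\in\gamma_3(G)\) for the \(g\mapsto g^p\) branch, and run the Pascal/hockey-stick commutator induction against Miech's relation for the two trace cases. The one detail worth making explicit is that collecting the \(y\)'s in \(\prod_{k=0}^{p-1}y^{x^k}\) requires commuting \(y\) past the \(s_j\), which for \(k(G)\ge 1\) holds only modulo \(\gamma_2(M_1)\) since \(\lbrack s_j,y\rbrack\in\gamma_2(G)^{y-1}=\gamma_2(M_1)\) — exactly the step the paper's proof spells out.
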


\begin{proof}

By means of formula (\ref{eqn:MaxTrf}), we calculate
the explicit image \(\mathrm{V}_i(g\gamma_2(G))\) of the transfer,
if the element \(g\) is represented
by the generators \(x,y\) of \(G\) in the shape
\(g\equiv x^jy^\ell\pmod{\gamma_2(G)}\) with \(0\le j,\ell\le p-1\).

\begin{enumerate}

\item
We start with the first maximal normal subgroup
\(M_1=\langle y,\gamma_2(G)\rangle\), for which
\(x\in G\setminus M_1\) and \(y\in M_1\)
and thus\\
\(\mathrm{V}_1(x\gamma_2(G))=x^p\gamma_2(M_1)=s_{m-1}^w\gamma_2(M_1)\),\\
\(\mathrm{V}_1(y\gamma_2(G))=y^{\mathrm{S}_p(x)}\gamma_2(M_1)=y^{\sum_{j=1}^p\,x^{j-1}}\gamma_2(M_1)\).\\
We distinguish two cases, according to the invariant \(k\) of \(G\).\\
If \(k=0\), then \(\gamma_2(M_1)=1\),
\(M_1\) is an abelian normal subgroup,
and we can use symbolic powers with the difference \(X=x-1\in\mathbb{Z}\lbrack G\rbrack\) in the exponents:\\
\(\mathrm{V}_1(y\gamma_2(G))=y^{\sum_{j=1}^p\,\binom{p}{j}X^{j-1}}
=\prod_{j=1}^p\,(y^{X^{j-1}})^{\binom{p}{j}}=y^p\prod_{j=2}^p\,s_j^{\binom{p}{j}}=s_{m-1}^z\).\\
However, if \(k\ge 1\), then \(\gamma_2(M_1)=\gamma_{m-k}(G)\ge\gamma_{m-1}(G)>1\).
Here we must derive a special case of a commutator formula by Miech \cite[p.338, Lem.2]{Mi},\\
\(\lbrack y,x^j\rbrack=\lbrack y,x\rbrack^{\mathrm{S}_j(x)}=s_2^{\sum_{r=1}^j\,x^{r-1}}
=s_2^{\sum_{r=1}^j\,\binom{j}{r}X^{r-1}}=\prod_{r=1}^j\,(s_2^{X^{r-1}})^{\binom{j}{r}}
=\prod_{r=1}^j\,s_{r+1}^{\binom{j}{r}}\),\\
and we obtain \(\mathrm{V}_1(y\gamma_2(G))=\)\\
\(=y^{\sum_{j=0}^{p-1}\,x^j}\gamma_2(M_1)
=\prod_{j=0}^{p-1}\,y^{x^j}\gamma_2(M_1)=\prod_{j=0}^{p-1}\,(y\lbrack y,x^j\rbrack)\gamma_2(M_1)
=\prod_{j=0}^{p-1}\,(y\prod_{r=1}^j\,s_{r+1}^{\binom{j}{r}})\gamma_2(M_1)\).\\
Now we observe that \(y\) commutes with all elements \(s_\ell\) modulo \(\gamma_2(M_1)\),\\
\(s_\ell y=ys_\ell\lbrack s_\ell,y\rbrack\equiv ys_\ell\pmod{\gamma_2(M_1)}\),\\
since
\(\lbrack s_\ell,y\rbrack\in\langle s_2,\ldots,s_{m-1}\rangle^{y-1}=\gamma_2(G)^{y-1}=\gamma_2(M_1)\)
for all \(\ell\ge 2\), and we obtain\\
\(\mathrm{V}_1(y\gamma_2(G))
=(\prod_{j=0}^{p-1}\,y)(\prod_{j=0}^{p-1}\,\prod_{r=1}^j\,s_{r+1}^{\binom{j}{r}})\gamma_2(M_1)\).\\
An interchange of the order of the factors in the double product yields\\
\(\mathrm{V}_1(y\gamma_2(G))
=y^p\prod_{r=1}^{p-1}\,\prod_{j=r}^{p-1}\,s_{r+1}^{\binom{j}{r}}\gamma_2(M_1)\)\\
\(=y^p\prod_{t=2}^p\,\prod_{j=t-1}^{p-1}\,s_t^{\binom{j}{t-1}}\gamma_2(M_1)
=y^p\prod_{t=2}^p\,s_t^{\sum_{j=t-1}^{p-1}\,{\binom{j}{t-1}}}\gamma_2(M_1)\).\\
Using the index transformation \(s=t-1\), \(\ell=j-s\), \(n=p-1-s\)
and the following well-known identity for binomial coefficients,
\[\sum_{\ell=0}^n\,\binom{s+\ell}{s}=\binom{s+n+1}{s+1}\text{ with }s,n\ge 0,\]
we finally obtain\\
\(\mathrm{V}_1(y\gamma_2(G))=y^p\prod_{t=2}^p\,s_t^{\binom{p}{t}}\gamma_2(M_1)
=s_{m-1}^z\gamma_2(M_1)\).

\item
The second maximal normal subgroup is
\(M_2=\langle x,\gamma_2(G)\rangle\), whence
\(x\in M_2\) and \(y\in G\setminus M_2\).
Therefore,\\
\(\mathrm{V}_2(x\gamma_2(G))=x^{\mathrm{S}_p(y)}\gamma_2(M_2)=x^{\sum_{j=0}^{p-1}\,y^j}\gamma_3(G)
=\prod_{j=0}^{p-1}\,x^{y^j}\gamma_3(G)=\prod_{j=0}^{p-1}\,(x\lbrack x,y^j\rbrack)\gamma_3(G)\).\\
Now we use a congruence modulo \(\gamma_3(G)\) for commutators of powers,
\[\lbrack x,y^j\rbrack\equiv\lbrack x,y\rbrack^j\pmod{\gamma_3(G)},\]
which is due to Blackburn \cite[p.49, Th.1.4]{Bl},
and we observe that
\[\lbrack x,y\rbrack x
=x\lbrack x,y\rbrack\cdot\lbrack\lbrack x,y\rbrack ,x\rbrack\equiv x\lbrack x,y\rbrack\pmod{\gamma_3(G)}.\]
\(\mathrm{V}_2(x\gamma_2(G))=\prod_{j=0}^{p-1}\,(x\lbrack x,y\rbrack^j)\gamma_3(G)
=(\prod_{j=0}^{p-1}\,x)(\prod_{j=0}^{p-1}\,\lbrack x,y\rbrack^j)\gamma_3(G)
=x^p(s_2^{-1})^{\sum_{j=0}^{p-1}\,j}\gamma_3(G)=x^ps_2^{-\binom{p}{2}}\gamma_3(G)
=x^p\gamma_3(G)=s_{m-1}^w\gamma_3(G)\),
since \((\gamma_2(G):\gamma_3(G))=p\) and thus \(s_2^p\in\gamma_3(G)\).
Here it is essential that \(p\) divides the binomial coefficient \(\binom{p}{2}\),
which is only correct for odd \(p\ge 3\).
Further, we have\\
\(\mathrm{V}_2(y\gamma_2(G))=y^p\gamma_2(M_2)
=y^p\prod_{j=2}^p\,s_j^{\binom{p}{j}}\gamma_3(G)
=s_{m-1}^z\gamma_3(G)\),
since \(s_j\in\gamma_3(G)\) for \(j\ge 3\)
and \(s_2^{\binom{p}{2}}\in\gamma_3(G)\), provided that \(p\ge 3\).

\item
All the other maximal normal subgroups \(M_i\) with \(3\le i\le p+1\)
can be treated in a uniform way.
Since \(M_i=\langle xy^{i-2},\gamma_2(G)\rangle\), we have
\(x\in G\setminus M_i\) und \(y\in G\setminus M_i\)
and thus\\
\(\mathrm{V}_i(x\gamma_2(G))=x^p\gamma_2(M_i)=s_{m-1}^w\gamma_3(G)\), similarly as for \(i=1\),\\
\(\mathrm{V}_i(y\gamma_2(G))=y^p\gamma_2(M_i)=s_{m-1}^z\gamma_3(G)\), similarly as for \(i=2\).
\end{enumerate}

\noindent
According to \cite[p.82]{Bl}, the possible maximum of the invariant \(k\)
depends on \(m\) and \(p\).
\end{proof}

Now we separately analyse the exceptional case \(p=2\) with its irregular expressions for the transfers.

\begin{theorem}
\label{t:TwoTrmTrf}

Let \(G=\langle x,y\rangle\) be a metabelian \(2\)-group of maximal class
and of order \(\lvert G\rvert=2^m\), where \(m\ge 3\).
Suppose that the generators of \(G\) are selected such that
\(x\in G\setminus\chi_2(G)\), if \(m\ge 4\), \(y\in\chi_2(G)\setminus\gamma_2(G)\),
and the relations (\ref{eqn:MaxRel})
are satisfied with exponents \(0\le w,z\le 1\).
Let the maximal normal subgroups \(M_1,\ldots,M_3\) be ordered by
\(M_1=\langle y,\gamma_2(G)\rangle\), \(M_2=\langle x,\gamma_2(G)\rangle\),
and \(M_3=\langle xy,\gamma_2(G)\rangle\).
Assume that
\[\mathrm{V}_i:G/\gamma_2(G)\longrightarrow M_i/\gamma_2(M_i),
\ g\gamma_2(G)\mapsto\mathrm{V}_i(g\gamma_2(G))\]
denotes the transfer from \(G\) to \(M_i\) for \(1\le i\le 3\).

If the cosets \(g\gamma_2(G)\in G/\gamma_2(G)\) are represented by
\(g\equiv x^jy^\ell\pmod{\gamma_2(G)}\) with \(0\le j,\ell\le 1\),
then the images of the transfers \(\mathrm{V}_i\)
reveal an irregular behavior for \(2\le i\le 3\)
and are given by

\begin{eqnarray*}
\label{e:TwoTrmTrf}
\mathrm{V}_1(x^jy^\ell\gamma_2(G))&=&s_{m-1}^{wj+z\ell}\cdot 1,\\
\mathrm{V}_2(x^jy^\ell\gamma_2(G))&=&s_{m-1}^{wj+z\ell}s_2^{-(j+\ell)}\gamma_3(G)=
\begin{cases}
s_2^{(w-1)j+(z-1)\ell}\cdot 1,&\text{ if }m=3,\\
s_2^{-(j+\ell)}\gamma_3(G),&\text{ if }m\ge 4,
\end{cases}\\
\mathrm{V}_3(x^jy^\ell\gamma_2(G))&=&s_{m-1}^{wj+z\ell}s_2^{-\ell}\gamma_3(G)=
\begin{cases}
s_2^{wj+(z-1)\ell}\cdot 1,&\text{ if }m=3,\\
s_2^{-\ell}\gamma_3(G),&\text{ if }m\ge 4.
\end{cases}
\end{eqnarray*}

\end{theorem}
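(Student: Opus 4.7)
The plan is to mirror the structure of Theorem \ref{t:MaxTrmTrf}, computing each $\mathrm{V}_i$ on the generators $x$ and $y$ via formula (\ref{eqn:MaxTrf}) and extending multiplicatively to arbitrary cosets $x^j y^\ell \gamma_2(G)$, since $\mathrm{V}_i$ is a homomorphism. A useful preliminary observation is that Miech's bound $k\le\min\{m-4,p-2\}$ specialised to $p=2$ forces $k=0$ for every $m\ge 3$; by (\ref{eqn:MaxComGrp}) this collapses $\gamma_2(M_1)$ to the trivial group, while $\gamma_2(M_i)=\gamma_3(G)$ for $i=2,3$. Hence the $\mathrm{V}_1$ computation will be exact in $M_1$, whereas for $M_2$ and $M_3$ we work modulo $\gamma_3(G)$.

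For $\mathrm{V}_1$, since $x\in G\setminus M_1$, formula (\ref{eqn:MaxTrf}) gives $\mathrm{V}_1(x\gamma_2(G))=x^2=s_{m-1}^w$ directly from (\ref{eqn:MaxRel}); and since $y\in M_1$, the trace $\mathrm{S}_2(x)=1+x$ together with $y^x=y[y,x]=ys_2$ yields $\mathrm{V}_1(y\gamma_2(G))=y\cdot y^x=y^2 s_2=s_{m-1}^z$, again by (\ref{eqn:MaxRel}) with $p=2$. For $\mathrm{V}_2$, with $y\in G\setminus M_2$, I read off $\mathrm{V}_2(y\gamma_2(G))=y^2\gamma_3(G)=s_{m-1}^z s_2^{-1}\gamma_3(G)$, while for $x\in M_2$ the computation $\mathrm{V}_2(x\gamma_2(G))=x\cdot x^y\gamma_3(G)=x^2[x,y]\gamma_3(G)=s_{m-1}^w s_2^{-1}\gamma_3(G)$ uses only $x^y=x[x,y]$ and $[x,y]=s_2^{-1}$. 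For $\mathrm{V}_3$, both $x$ and $y$ lie outside $M_3=\langle xy,\gamma_2(G)\rangle$, so both images are second powers and are read off directly from (\ref{eqn:MaxRel}). Combining via the homomorphism property then yields each of the three asserted formulas; the case split $m=3$ versus $m\ge 4$ merely reflects that $s_{m-1}=s_2$ in the former and $s_{m-1}\in\gamma_3(G)$ in the latter.

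The genuine obstacle, and the source of the irregularity advertised in the statement, is the persistent factor $s_2^{-1}$ appearing in $y^2$ (from relation (\ref{eqn:MaxRel}) specialised to $p=2$) and in $x\cdot x^y$ (from $[x,y]=s_2^{-1}$). In the odd-$p$ argument of Theorem \ref{t:MaxTrmTrf} the analogous term $s_2^{-\binom{p}{2}}$ is absorbed into $\gamma_3(G)$ because $p\mid\binom{p}{2}$ for $p\ge 3$; for $p=2$, $\binom{2}{2}=1$ fails this divisibility and the factor survives, propagating through the formulas for $\mathrm{V}_2$ and $\mathrm{V}_3$. The only real work is to track this single surviving term carefully in each case; everything else is a direct specialisation of the arguments already made for odd primes.
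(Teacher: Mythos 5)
Your proposal is correct and follows essentially the same route as the paper: compute $\mathrm{V}_i$ on the generators $x,y$ via formula (\ref{eqn:MaxTrf}), using $\gamma_2(M_1)=1$ (forced by $k=0$ when $p=2$) and $\gamma_2(M_i)=\gamma_3(G)$ for $i=2,3$, and track the surviving factor $s_2^{-1}$ coming from $y^2s_2=s_{m-1}^z$ and from $\lbrack x,y\rbrack=s_2^{-1}$. The only cosmetic difference is that the paper additionally invokes the classification of $G$ as dihedral, generalised quaternion, or semi-dihedral to note that $\gamma_2(G)=\langle s_2\rangle$ is cyclic, which is not needed for the computation itself.
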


\begin{proof}

According to \cite[p.23, Th.1.2]{Bv} and \cite[Ch.5, Th.4.5]{Go},
\(G\) is isomorphic to
either a dihedral group of order \(2^m\), \(m\ge 3\),
or a generalised quaternion group of order \(2^m\), \(m\ge 3\),
or a semi-dihedral group of order \(2^m\), \(m\ge 4\).
In each case, we have \(s_j=y^{(-2)^{j-1}}\) for \(j\ge 3\).
Thus \(\gamma_2(G)=\langle s_2,\ldots,s_{m-1}\rangle=\langle s_2\rangle=\langle y^2\rangle\),
and the representation of the three maximal normal subgroups \(M_i\) of \(G\) becomes simply

\begin{eqnarray*}
\label{e:TwoNrm}
M_1&=&\langle y,\gamma_2(G)\rangle=\langle y,s_2\rangle=\langle y\rangle\text{ (cyclic)},\\
M_2&=&\langle x,\gamma_2(G)\rangle=\langle x,s_2\rangle,\\
M_3&=&\langle xy,\gamma_2(G)\rangle=\langle xy,s_2\rangle.\\
\end{eqnarray*}

The commutator groups \(\gamma_2(M_i)\) are of the form

\begin{eqnarray*}
\label{e:TwoComGrp}
\gamma_2(M_1)&=&1,\text{ since }k=0,\\
\gamma_2(M_i)&=&\gamma_3(G)\text{ for }2\le i\le 3.
\end{eqnarray*}

Similarly as in the proof of theorem \ref{t:MaxTrmTrf}, we select
an element \(h\in G\setminus M_i\) and denote by
\(\mathrm{S}_2(h)=\sum_{j=1}^2\,h^{j-1}=1+h\in\mathbb{Z}\lbrack G\rbrack\)
the second trace element of \(h\), for an arbitrary fixed index \(1\le i\le 3\).
Then the transfer from \(G\) to \(M_i\) is given by
\[\mathrm{V}_i:G/\gamma_2(G)\longrightarrow M_i/\gamma_2(M_i),
\ g\gamma_2(G)\mapsto
\begin{cases}
g^2\gamma_2(M_i),&\text{ if }g\in G\setminus M_i,\\
g^{1+h}\gamma_2(M_i),&\text{ if }g\in M_i,
\end{cases}\]
according to formula (\ref{eqn:MaxTrf}).
By means of these formulas we calculate
the explicit image \(\mathrm{V}_i(g\gamma_2(G))\) of the transfer,
where the element \(g\) is represented
by the generators \(x,y\) of \(G\) in the shape
\(g\equiv x^jy^\ell\pmod{\gamma_2(G)}\) with \(0\le j,\ell\le 1\).

We partially obtain irregular images of the transfers,
since the main commutator \(s_2\) appears only in its first power
in the Blackburn relation \(y^2s_2=s_{m-1}^z\) for the second power of \(y\),
and \(s_2\) is not contained in \(\gamma_3(G)\).
Irregularities only occur for \(\mathrm{V}_2\) and \(\mathrm{V}_3\),
but not for the abelian maximal normal subgroup \(M_1\).
The images of the generators are

\begin{eqnarray*}
\label{e:TwoTrf1}
\mathrm{V}_1(y)&=&y^{1+x}\gamma_2(M_1)=yx^{-1}yx\cdot 1=y^2y^{-1}x^{-1}yx\cdot 1=y^2s_2\cdot 1=s_{m-1}^z\cdot 1,\\
\mathrm{V}_1(x)&=&x^2\gamma_2(M_1)=s_{m-1}^w\cdot 1,\\
\mathrm{V}_2(y)&=&y^2\gamma_2(M_2)=s_{m-1}^z s_2^{-1}\gamma_3(G)\text{ (irregular)},\\
\mathrm{V}_2(x)&=&x^{1+y}\gamma_2(M_2)=xy^{-1}xy\gamma_3(G)=x^2x^{-1}y^{-1}xy\gamma_3(G)=x^2s_2^{-1}\gamma_3(G)
=s_{m-1}^w s_2^{-1}\gamma_3(G)\text{ (irregular)},\\
\mathrm{V}_3(y)&=&y^2\gamma_2(M_3)=s_{m-1}^z s_2^{-1}\gamma_3(G)\text{ (irregular)},\\
\mathrm{V}_3(x)&=&x^2\gamma_2(M_3)=s_{m-1}^w\gamma_3(G).
\end{eqnarray*}
\end{proof}

%--------------------------------------------------------------------------------

\subsection{Singulets and multiplets of transfer types}
\label{ss:TypTrf}

For the kernel of the transfer
\[\mathrm{V}_i:G/\gamma_2(G)\longrightarrow M_i/\gamma_2(M_i),
\ g\gamma_2(G)\mapsto\mathrm{V}_i(g\gamma_2(G))\]
from a \(p\)-group \(G\)
with abelian commutator group \(\gamma_2(G)\)
and commutator factor group \(G/\gamma_2(G)\) of type \((p,p)\)
to one of its maximal normal subgroups \(M_i\), where \(1\le i\le p+1\),
we have \(p+3\) possibilities,
\[\text{either }\mathrm{Ker}(\mathrm{V}_i)=1\quad\text{ or }
\mathrm{Ker}(\mathrm{V}_i)=M_j/\gamma_2(G)\text{ with }1\le j\le p+1
\quad\text{ or }\mathrm{Ker}(\mathrm{V}_i)=G/\gamma_2(G)\,.\]
However, in subsection \ref{ss:MaxKerTrf} it will turn out that
a transfer \(\mathrm{V}_i\) is never injective, that is,
the transfer kernel \(\mathrm{Ker}(\mathrm{V}_i)>1\) is always non-trivial,
provided that \(G\) is of maximal class.

Consequently, there remain \(p+2\) possible
\textit{singulets of transfer types} \(\varkappa(i)\)
for each individual maximal normal subgroup \(M_i<G\):

\begin{itemize}

\item
either a \textit{total} transfer with two-dimensional kernel,
\[\mathrm{Ker}(\mathrm{V}_i)=G/\gamma_2(G),\text{ denoted by the symbol }\varkappa(i)=0\,,\]

\item
or a \textit{partial} transfer with one-dimensional kernel,
\[\mathrm{Ker}(\mathrm{V}_i)=M_j/\gamma_2(G)
\text{ for some }1\le j\le p+1,\text{ denoted by the symbol }\varkappa(i)=j\,.\]

\end{itemize}

\noindent
According to Taussky \cite[p.435]{Ta}, we also have a coarser distinction:

\begin{itemize}

\item
condition (A),
\[\mathrm{Ker}(\mathrm{V}_i)\cap M_i/\gamma_2(G)>1\,,\]
since either \(\mathrm{Ker}(\mathrm{V}_i)=G/\gamma_2(G)\) with \(\varkappa(i)=0\)
for a total transfer
or \(\mathrm{Ker}(\mathrm{V}_i)=M_i/\gamma_2(G)\) with \(\varkappa(i)=i\)
for a partial transfer with \textit{fixed point} of \(\varkappa\),

\item
condition (B),
\[\mathrm{Ker}(\mathrm{V}_i)\cap M_i/\gamma_2(G)=1\,,\]
since \(\mathrm{Ker}(\mathrm{V}_i)=M_j/\gamma_2(G)\) with \(\varkappa(i)=j\ne i\)
for a partial transfer without fixed point,
where the exact value of \(j\) remains unknown.

\end{itemize}

\noindent
The singulets are combined to a
\textit{multiplet of transfer types}
for the family of all \(p+1\) maximal normal subgroups \(M_i<G\) with \(1\le i\le p+1\),
\[\varkappa=(\varkappa(1),\ldots,\varkappa(p+1))\in [0,p+1]^{p+1}\,.\]

\noindent
The number of total transfers
is an invariant \(\nu=\nu(G)\) of the \(p\)-group \(G\):

\begin{definition}
\label{d:TotTrf}
Let \(0\le\nu\le p+1\) be the number
\(\nu=\#\lbrace 1\le i\le p+1\mid\mathrm{Ker}(\mathrm{V}_i)=G/\gamma_2(G)\rbrace\)
of maximal normal subgroups \(M_i\) of \(G\),
for which the transfer \(\mathrm{V}_i\) from \(G\) to \(M_i\) is total.
\end{definition}

\noindent
We call a
multiplet \(\varkappa=(\varkappa(1),\ldots,\varkappa(p+1))\) of transfer types
\textit{partial}, if \(\varkappa(i)\ne 0\) for all \(1\le i\le p+1\),
that is, if \(\nu=0\),
and otherwise \textit{total}.

\noindent
The orbit \(\varkappa^{S_{p+1}}\) of a multiplet \(\varkappa\in [0,p+1]^{p+1}\)
under the operation \(\varkappa^\pi=\pi_0^{-1}\circ\varkappa\circ\pi\)
of the symmetric group \(S_{p+1}\) of degree \(p+1\)
is an invariant of the \(p\)-group \(G\),
which is independent from the order of the normal subgroups \(M_i\),
and is called the \textit{transfer type} \(\varkappa(G)\) of \(G\).
Here we assume that the extension \(\pi_0\) of \(\pi\in S_{p+1}\)
to \([0,p+1]\) fixes the zero.

%--------------------------------------------------------------------------------

\subsection{Extension and principalisation of ideal classes}
\label{ss:ExtPrc}

At the present position it is adequate
to turn to the number theoretical applications
of our purely group theoretical results.

For an arbitrary prime \(p\ge 2\),
the second \(p\)-class group \cite{Ma},
that is the Galois group \(G=\mathrm{Gal}(\mathrm{F}_p^2(K)\vert K)\)
of the second Hilbert \(p\)-class field \(\mathrm{F}_p^2(K)\),
of an algebraic number field \(K\)
with \(p\)-class group \(\mathrm{Cl}_p(K)\) of type \((p,p)\)
is a metabelian \(p\)-group \(G\)
with abelianisation \(G/\gamma_2(G)\) of type \((p,p)\).
The reason for this fact is that,
according to the reciprocity law of Artin \cite{Ar1}
and Galois correspondence, the commutator group
\[\gamma_2(G)=\mathrm{Gal}(\mathrm{F}_p^2(K)\vert\mathrm{F}_p^1(K))\simeq\mathrm{Cl}_p(\mathrm{F}_p^1(K))\]
is abelian as the \(p\)-class group of the first Hilbert \(p\)-class field \(\mathrm{F}_p^1(K)\) of \(K\)
and the commutator factor group
\[G/\gamma_2(G)
=\mathrm{Gal}(\mathrm{F}_p^2(K)\vert K)/\mathrm{Gal}(\mathrm{F}_p^2(K)\vert\mathrm{F}_p^1(K))
\simeq\mathrm{Gal}(\mathrm{F}_p^1(K)\vert K)
\simeq\mathrm{Cl}_p(K)\]
is of type \((p,p)\).

By the Galois correspondence \(M_i=\mathrm{Gal}(\mathrm{F}_p^2(K)\vert N_i)\),
the maximal normal subgroups \(M_1,\ldots,M_{p+1}\) of \(G\) are associated with
the \(p+1\) unramified cyclic extensions \(N_1,\ldots,N_{p+1}\)
of \(K\) of relative degree \(p\),
which are represented by the norm class groups
\(\mathrm{Norm}_{N_i\vert K}(\mathrm{Cl}_p(N_i))\) as subgroups of index \(p\)
in the \(p\)-class group \(\mathrm{Cl}_p(K)\) of \(K\),
according to \cite{Ar1}.
The abelianisations of the \(M_i\),
\[M_i/\gamma_2(M_i)
=\mathrm{Gal}(\mathrm{F}_p^2(K)\vert N_i)/\mathrm{Gal}(\mathrm{F}_p^2(K)\vert\mathrm{F}_p^1(N_i))
\simeq\mathrm{Gal}(\mathrm{F}_p^1(N_i)\vert N_i)
\simeq\mathrm{Cl}_p(N_i)\,,\]
are isomorphic to the \(p\)-class groups of the \(N_i\),
by \cite{Ar1}.

In algebraic number theory,
we are interested in the type of the principalisation
of ideal classes of \(\mathrm{Cl}_p(K)\)
in the \(p\)-class groups \(\mathrm{Cl}_p(N_i)\) \cite[5.3]{Ma}.
Therefore, we investigate
the kernel of the \textit{class extension} homomorphisms
\[\mathrm{j}_{N_i\vert K}:\mathrm{Cl}_p(K)\longrightarrow\mathrm{Cl}_p(N_i),\
\mathfrak{a}\mathcal{P}_K\mapsto(\mathfrak{a}\mathcal{O}_{N_i})\mathcal{P}_{N_i}\,,\]
where \(\mathcal{P}\) denotes the group of principal ideals
and \(\mathcal{O}\) the maximal order.
According to Hilbert's theorem \(94\) \cite[p.279]{Hi}, the class extension
\(\mathrm{j}_{N_i\vert K}\) has a non-trivial kernel
\(\mathrm{Ker}(\mathrm{j}_{N_i\vert K})>1\).
We define the \textit{multiplet} \(\varkappa\in [0,p+1]^{p+1}\)
\textit{of principalisation types} of \(K\) for \(1\le i\le p+1\) by
\[\mathrm{Ker}(\mathrm{j}_{N_i\vert K})=\mathrm{Norm}_{N_{\varkappa(i)}\vert K}(\mathrm{Cl}_p(N_{\varkappa(i)})),\]
if \(1\le\varkappa(i)\le p+1\), that is, for \textit{partial} principalisation,
and we put \(\varkappa(i)=0\) for \textit{total} principalisation,
\(\mathrm{Ker}(\mathrm{j}_{N_i\vert K})=\mathrm{Cl}_p(K)\).
The \textit{principalisation type} \(\varkappa(K)\) of \(K\) is defined
as the orbit \(\varkappa^{S_{p+1}}\) of the multiplet \(\varkappa\)
under the operation \(\varkappa^\pi=\pi_0^{-1}\circ\varkappa\circ\pi\)
of the symmetric group \(S_{p+1}\) of degree \(p+1\),
where we assume that the extension \(\pi_0\) of \(\pi\in S_{p+1}\)
to \([0,p+1]\) fixes the zero.\\
The extent of total principalisation
is expressed by an invariant \(\nu=\nu(K)\) of the field \(K\):

\begin{definition}
\label{d:TotPrc}
Let \(0\le\nu\le p+1\) be the number
\(\nu=\#\lbrace 1\le i\le p+1\mid\mathrm{Ker}(\mathrm{j}_{N_i\vert K})=\mathrm{Cl}_p(K)\rbrace\)
of unramified cyclic extension fields \(N_i\) of \(K\) of relative degree \(p\),
in which the entire \(p\)-class group \(\mathrm{Cl}_p(K)\) of \(K\)
becomes principal (cfr. \cite[capitulation  number, p.1230]{ChFt}.
\end{definition}

According to Artin \cite{Ar2} (see also Miyake \cite[p.297, Cor.]{My}),
the following commutative diagram establishes
the connection between the
number theoretical extensions \(\mathrm{j}_{N_i\vert K}\)
of \(p\)-class groups and the
group theoretical transfers \(\mathrm{V}_i=\mathrm{V}_{G,M_i}\)
from the abelianisation of the second \(p\)-class group
\(G=\mathrm{Gal}(\mathrm{F}_p^2(K)\vert K)\) of \(K\)
to the abelianisations of its maximal normal subgroups
\(M_i=\mathrm{Gal}(\mathrm{F}_p^2(K)\vert N_i)\) with \(1\le i\le p+1\).

\renewcommand{\arraystretch}{1.5}
\begin{table}[ht]
%\caption{Class extension and transfer}
\label{tab:Transfer}
\begin{center}
\begin{tabular}{ccccc}
                  &                      & \(\mathrm{j}_{N_i\vert K}\) &                        &                  \\
                  & \(\mathrm{Cl}_p(K)\) & \(\longrightarrow\)         & \(\mathrm{Cl}_p(N_i)\) &                  \\
Artin isomorphism & \(\updownarrow\)     &                             & \(\updownarrow\)       & Artin isomorphism\\
                  & \(G/\gamma_2(G)\)    & \(\longrightarrow\)         & \(M_i/\gamma_2(M_i)\)  &                  \\
                  &                      & \(\mathrm{V}_{G,M_i}\)      &                        &                  \\
\end{tabular}
\end{center}
\end{table}

\noindent
Due to the commutativity of this diagram,
the following number theoretical and group theoretical concepts
correspond to each other:
principalisation kernels and transfer kernels,
\(\mathrm{Ker}(\mathrm{j}_{N_i\vert K})\simeq\mathrm{Ker}(\mathrm{V}_i)\) for \(1\le i\le p+1\),
norm class groups and cyclic subgroups of \(G/\gamma_2(G)\),
\(\mathrm{Norm}_{N_i\vert K}(\mathrm{Cl}_p(N_i))\simeq M_i/\gamma_2(G)\) for \(1\le i\le p+1\),
multiplets of principalisation types, multiplets of transfer types, and their orbits,
\(\varkappa(K)=\varkappa(G)\),
and finally the invariants
\(\nu(K)=\nu(G)\) describing the total principalisation and the total transfer.

\begin{remark}
\label{r:Hil94}
For an arbitrary prime \(p\ge 2\) and
an arbitrary base field \(K\) with \(p\)-class group of type \((p,p)\)
and second \(p\)-class group \(G=\mathrm{Gal}(\mathrm{F}_p^2(K)\vert K)\) of maximal class,
Hilbert's theorem \(94\) \cite{Hi},
that is the non-injectivity of the class extension homomorphisms \(\mathrm{j}_{N_i\vert K}\),
is a consequence of the non-injectivity of the transfers \(\mathrm{V}_i\), which
will be proved in the theorems \ref{t:AblKerTrf}, \ref{t:MaxKerTrf}, and \ref{t:TwoKerTrf}.
For second \(p\)-class groups \(G\) of non-maximal class
with \(G/\gamma_2(G)\) of type \((p,p)\),
which can occur only for \(p\ge 3\),
the non-injectivity of the transfers \(\mathrm{V}_i\)
has been proved by Nebelung for \(p=3\) \cite[p.208, Satz 6.14]{Ne}.
\end{remark}

%--------------------------------------------------------------------------------

\subsection{Kernels of the transfers}
\label{ss:MaxKerTrf}

By means of the expressions for the transfers,
which have been determined in subsection \ref{ss:MaxTrmTrf},
we are now able to calculate
the transfer kernels \(\mathrm{Ker}(\mathrm{V}_i)\) for \(1\le i\le p+1\)
and the transfer type \(\varkappa(G)\)
of the metabelian \(p\)-group \(G\) of maximal class.
We begin with the degenerate case of the
elementary abelian \(p\)-group of type \((p,p)\).

\begin{theorem}
\label{t:AblKerTrf}

Let \(p\ge 2\) be an arbitrary prime and
\(G\) the elementary abelian bicyclic \(p\)-group of type \((p,p)\)
and of order \(\lvert G\rvert=p^m\), \(m=2\)
with its \(p+1\) cyclic subgroups
as maximal normal subgroups \(M_1,\ldots,M_{p+1}\).
Then the following statements hold.

\begin{enumerate}

\item
The kernels of the transfers
\(\mathrm{V}_i:G/\gamma_2(G)\longrightarrow M_i/\gamma_2(M_i)\)
for \(1\le i\le p+1\)
are all two-dimensional of type \((p,p)\),
\[\mathrm{Ker}(\mathrm{V}_i)=G/\gamma_2(G)\,,\qquad\varkappa(i)=0\,.\]

\item
The multiplet \(\varkappa=(\varkappa(1),\ldots,\varkappa(p+1))\)
of the transfer types of \(G\) is given by
\[\varkappa=(\overbrace{0,\ldots,0}^{p+1\text{ times}}),\ \nu=p+1\,.\]

\end{enumerate}

\end{theorem}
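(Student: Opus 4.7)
The plan is to read off everything directly from Theorem \ref{t:AblTrmTrf} together with the definitions of singulets, multiplets, and $\nu$ given in subsection \ref{ss:TypTrf}. There is essentially no computation to perform.

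First I would recall Theorem \ref{t:AblTrmTrf}, which states that for the elementary abelian bicyclic $p$-group $G$ of type $(p,p)$, every transfer $\mathrm{V}_i$ sends each coset $g\gamma_2(G)$ to the trivial element of $M_i/\gamma_2(M_i)$. Consequently, the kernel of $\mathrm{V}_i$ is the entire domain:
\[
\mathrm{Ker}(\mathrm{V}_i) = G/\gamma_2(G) \qquad \text{for every } 1 \le i \le p+1.
\]
Since $G/\gamma_2(G) = G$ itself is of type $(p,p)$, each kernel is two-dimensional, establishing claim (1).

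For claim (2), I would appeal to the singulet convention from subsection \ref{ss:TypTrf}: the case $\mathrm{Ker}(\mathrm{V}_i) = G/\gamma_2(G)$ corresponds to a total transfer and is encoded by the symbol $\varkappa(i) = 0$. Applying this to each $i \in \{1, \ldots, p+1\}$ gives the multiplet
\[
\varkappa = (\underbrace{0, \ldots, 0}_{p+1 \text{ times}}).
\]
Finally, by Definition \ref{d:TotTrf}, the invariant $\nu$ counts the indices $i$ for which the transfer is total, so here $\nu = p+1$.

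There is no real obstacle: the whole statement is an immediate corollary of the preceding theorem once the symbolic conventions $\varkappa(i) = 0$ and the counting definition of $\nu$ are invoked. The only care needed is to note explicitly that $\gamma_2(G) = 1$ in the abelian case, so that $G/\gamma_2(G) = G$ and the kernel is genuinely of type $(p,p)$ rather than a proper quotient.
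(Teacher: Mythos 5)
Your proposal is correct and follows exactly the same route as the paper: both deduce from Theorem \ref{t:AblTrmTrf} that every transfer has trivial image, hence full kernel \(G/\gamma_2(G)\), and then read off \(\varkappa(i)=0\) and \(\nu=p+1\) from the conventions of subsection \ref{ss:TypTrf}. Your added remark that \(\gamma_2(G)=1\) so the kernel is genuinely of type \((p,p)\) is a harmless (and slightly more careful) elaboration of what the paper leaves implicit.
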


\begin{proof}

By theorem \ref{t:AblTrmTrf}, all images of the transfers are trivial,
\(\mathrm{V}_i(g\gamma_2(G))=1\), for all elements \(g\in G\) and all \(1\le i\le p+1\).
Therefore we have \(\mathrm{Ker}(\mathrm{V}_i)=G/\gamma_2(G)\) for \(1\le i\le p+1\).
\end{proof}

Now we come to the uniform standard case
of a metabelian \(p\)-group of maximal class
with an odd prime \(p\ge 3\).

\begin{theorem}
\label{t:MaxKerTrf}

Let \(p\ge 3\) be an odd prime and
\(G\) a metabelian \(p\)-group of maximal class
of order \(\lvert G\rvert=p^n\)
and class \(\mathrm{cl}(G)=m-1\)
where \(n=m\ge 3\).
Suppose that generators \(x,y\) of \(G\) are selected such that
\(x\in G\setminus\chi_2(G)\), if \(m\ge 4\), \(y\in\chi_2(G)\setminus\gamma_2(G)\),
and the relations (\ref{eqn:MaxRel})
with exponents \(0\le w,z\le p-1\) are satisfied.
In the case of \(m=3\) and the extra special \(p\)-group
\(G\simeq G_0^{(3)}(0,1)\) of exponent \(p^2\),
let \(y\) be of order \(p\).
Let the maximal normal subgroups \(M_1,\ldots,M_{p+1}\) of \(G\) be ordered by
\(M_1=\langle y,\gamma_2(G)\rangle\) and
\(M_i=\langle xy^{i-2},\gamma_2(G)\rangle\) for \(2\le i\le p+1\).
Then, in particular \(M_1=\chi_2(G)\), if \(m\ge 4\),
and the following statements hold.

\begin{enumerate}

\item
The kernels of the transfers
\(\mathrm{V}_i:G/\gamma_2(G)\longrightarrow M_i/\gamma_2(M_i)\)
for \(1\le i\le p+1\)
are given by

\begin{eqnarray*}
\label{e:MaxKerDtl}
\mathrm{Ker}(\mathrm{V}_1)&=&
\begin{cases}
G/\gamma_2(G),&\text{ if }\lbrack\chi_2(G),\gamma_2(G)\rbrack=1,\ k=0,\ m\ge 3,\ w=0,\ z=0,\\
M_1/\gamma_2(G),&\text{ if }\lbrack\chi_2(G),\gamma_2(G)\rbrack=1,\ k=0,\ m\ge 3,\ w=1,\ z=0,\\
M_2/\gamma_2(G),&\text{ if }\lbrack\chi_2(G),\gamma_2(G)\rbrack=1,\ k=0,\ m\ge 4,\ w=0,\ z>0,\\
G/\gamma_2(G),&\text{ if }\lbrack\chi_2(G),\gamma_2(G)\rbrack=\gamma_{m-1}(G),\ k=1,\ m\ge 5,\ p\ge 3,\\
G/\gamma_2(G),&\text{ if }\lbrack\chi_2(G),\gamma_2(G)\rbrack=\gamma_{m-k}(G),\ k\ge 2,\ m\ge 6,\ p\ge 5,
\end{cases}\\
\mathrm{Ker}(\mathrm{V}_i)&=&
\begin{cases}
M_1/\gamma_2(G),&\text{ if }m=3,\ w=1,\ z=0,\\
G/\gamma_2(G),&\text{ if }m=3,\ w=0,\ z=0,\\
G/\gamma_2(G),&\text{ if }m\ge 4,
\end{cases}
\qquad\text{ for }2\le i\le p+1.
\end{eqnarray*}

\item
The singulets \(\varkappa(i)\) of transfer types of \(G\)
with \(1\le i\le p+1\) are given by

\begin{eqnarray*}
\label{e:MaxSng}
\varkappa(1)&=&
\begin{cases}
0,&\text{ if }m\ge 3,\ G\simeq G_0^{(m)}(0,0),\\
1,&\text{ if }m\ge 3,\ G=G_0^{(m)}(0,1),\\
2,&\text{ if }m\ge 4,\ G=G_0^{(m)}(z,0),\ z>0,\\
0,&\text{ if }\lbrack\chi_2(G),\gamma_2(G)\rbrack>1,\ k\ge 1,\ m\ge 5,
\end{cases}\\
\varkappa(i)&=&
\begin{cases}
1,&\text{ if }m=3,\ G=G_0^{(3)}(0,1),\\
0,&\text{ if }m=3,\ G\simeq G_0^{(3)}(0,0),\\
0,&\text{ if }m\ge 4,
\end{cases}
\qquad\text{ for }2\le i\le p+1.
\end{eqnarray*}

\item
The multiplet \(\varkappa=(\varkappa(1),\ldots,\varkappa(p+1))\)
of transfer types of \(G\) is given by

\begin{eqnarray*}
\label{e:MaxMlt}
\varkappa&=&
\begin{cases}
(\overbrace{1,\ldots,1}^{p+1\text{ times}}),\ \nu=0,&\text{ if }m=3,\ G=G_0^{(3)}(0,1),\\
(\overbrace{0,\ldots,0}^{p+1\text{ times}}),\ \nu=p+1,&\text{ if }m\ge 3,\ G\simeq G_0^{(m)}(0,0),\\
(1,\overbrace{0,\ldots,0}^{p\text{ times}}),\ \nu=p,&\text{ if }m\ge 4,\ G=G_0^{(m)}(0,1),\\
(2,\overbrace{0,\ldots,0}^{p\text{ times}}),\ \nu=p,&\text{ if }m\ge 4,\ G=G_0^{(m)}(z,0),\ z>0,\\
(\overbrace{0,\ldots,0}^{p+1\text{ times}}),\ \nu=p+1,&\text{ if }\lbrack\chi_2(G),\gamma_2(G)\rbrack>1,\ k\ge 1,\ m\ge 5.
\end{cases}
\end{eqnarray*}

\end{enumerate}

\end{theorem}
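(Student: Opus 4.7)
The plan is to leverage the explicit transfer formulas of Theorem \ref{t:MaxTrmTrf} and reduce every kernel computation to asking when $s_{m-1}^{wj+z\ell}\in\gamma_2(M_i)$. First I would record two preliminary facts: since $G$ is of maximal class of order $p^m$, the bottom factor $\gamma_{m-1}(G)/\gamma_m(G)=\gamma_{m-1}(G)$ is cyclic of order $p$, generated by $s_{m-1}$; and for $m\ge 4$ the characteristic subgroup $\chi_2(G)$ has index $p$ in $G$, hence coincides with $M_1=\langle y,\gamma_2(G)\rangle$ by the choice $y\in\chi_2(G)\setminus\gamma_2(G)$.

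With this in place I separate two regimes according to whether $s_{m-1}$ lies inside $\gamma_2(M_i)$. From the formulas (\ref{eqn:MaxComGrp}) this happens precisely when $i=1$ and $k\ge 1$ (then $\gamma_2(M_1)=\gamma_{m-k}(G)\supseteq\gamma_{m-1}(G)$), or when $2\le i\le p+1$ and $m\ge 4$ (then $\gamma_2(M_i)=\gamma_3(G)\supseteq\gamma_{m-1}(G)$). In either case every transfer image is trivial and $\mathrm{Ker}(\mathrm{V}_i)=G/\gamma_2(G)$, so $\varkappa(i)=0$.

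In the complementary regime one has $\gamma_2(M_i)\cap\gamma_{m-1}(G)=1$, so the image $s_{m-1}^{wj+z\ell}\gamma_2(M_i)$ is trivial iff $wj+z\ell\equiv 0\pmod{p}$. Viewing $(j,\ell)$ as running over $\mathbb{F}_p^2$ with $M_1/\gamma_2(G)=\langle y\gamma_2(G)\rangle$ corresponding to the line $j=0$ and $M_2/\gamma_2(G)=\langle x\gamma_2(G)\rangle$ to the line $\ell=0$, the three admissible shapes $(z,w)=(0,0)$, $(0,1)$ and $(z,0)$ with $z>0$ of the presentation $G\simeq G_0^{(m)}(z,w)$ yield respectively the kernels $G/\gamma_2(G)$, $M_1/\gamma_2(G)$ and $M_2/\gamma_2(G)$, i.e.\ $\varkappa=0,1,2$. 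The $m=3$ case for $2\le i\le p+1$ is treated identically with $s_{m-1}=s_2$ and $\gamma_2(M_i)=\gamma_3(G)=1$; in particular the normalisation ``$y$ of order $p$'' for the extra-special group $G\simeq G_0^{(3)}(0,1)$ forces $(z,w)=(0,1)$, yielding $\varkappa(i)=1$ as stated.

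Finally I would assemble the multiplets from the singulets: the uniform value $\varkappa(2)=\cdots=\varkappa(p+1)$ obtained above fills the last $p$ entries, while $\varkappa(1)$ contributes the first entry, and matching each $(k,m,z,w)$ regime to the listing in the statement produces the five displayed multiplets. The main obstacle is really the bookkeeping in the $k=0$ case for $\mathrm{V}_1$: one must verify that under the presentation conventions of Blackburn \cite{Bl} and Miech \cite{Mi} the pairs $(z,w)$ indeed reduce to the three listed shapes, and one must correctly identify the resulting cyclic $\mathbb{F}_p$-lines with the enumerated maximal normal subgroups $M_i$ using the fixed ordering $M_1=\langle y,\gamma_2(G)\rangle$, $M_i=\langle xy^{i-2},\gamma_2(G)\rangle$ for $i\ge 2$.
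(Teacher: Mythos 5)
Your proposal is correct and follows essentially the same route as the paper: both read off the explicit images from Theorem \ref{t:MaxTrmTrf}, observe that the image is automatically trivial whenever $s_{m-1}\in\gamma_2(M_i)$ (i.e.\ $i=1$ with $k\ge 1$, or $i\ge 2$ with $m\ge 4$), and otherwise solve $wj+z\ell\equiv 0\pmod p$ over the three admissible parameter shapes $(z,w)$ supplied by Blackburn's classification to identify the kernel as $G/\gamma_2(G)$, $M_1/\gamma_2(G)$ or $M_2/\gamma_2(G)$. Your reorganisation of the case split around the containment $s_{m-1}\in\gamma_2(M_i)$ is only a cosmetic variant of the paper's case split on $i$ and $k$.
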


\begin{proof}

According to theorem \ref{t:MaxTrmTrf}, the images of the transfers
in dependence on the invariants \(k\) and \(m\), for elements
\(g\equiv x^jy^\ell\pmod{\gamma_2(G)}\) with \(0\le j,\ell\le p-1\),
are given by

\begin{eqnarray*}
\label{e:MaxTrfRcl}
\mathrm{V}_1(x^jy^\ell\gamma_2(G))&=&
\begin{cases}
s_{m-1}^{wj+z\ell}\cdot 1,&\text{ if }k=0,\\
1\cdot\gamma_{m-k}(G),&\text{ if }k\ge 1,
\end{cases}\\
\mathrm{V}_i(x^jy^\ell\gamma_2(G))&=&
\begin{cases}
s_2^{wj+z\ell}\cdot 1,&\text{ if }m=3,\\
1\cdot\gamma_3(G),&\text{ if }m\ge 4,
\end{cases}
\qquad\text{ for }2\le i\le p+1.
\end{eqnarray*}

\noindent
To determine the transfer kernel
\(\mathrm{Ker}(\mathrm{V}_i)\) for \(1\le i\le p+1\),
we have to solve the equation
\(\mathrm{V}_i(x^jy^\ell\gamma_2(G))=1\cdot\gamma_2(M_i)\)
with respect to the element \(x^jy^\ell\), that is, with respect to \(j,\ell\),
using the image of the transfer \(\mathrm{V}_i\).

Since we have the trivial image
\(\mathrm{V}_i(x^jy^\ell\gamma_2(G))
=1\cdot\gamma_3(G)=1\cdot\gamma_2(M_i)\)
for \(m\ge 4\) and \(2\le i\le p+1\),
the exponents \(0\le j,\ell\le p-1\) can both be selected arbitrarily,
and thus \(\mathrm{Ker}(\mathrm{V}_i)=G/\gamma_2(G)\)
is two-dimensional, that is \(\varkappa(i)=0\).\\
Similarly, we have the trivial image
\(\mathrm{V}_1(x^jy^\ell\gamma_2(G))
=1\cdot\gamma_{m-k}(G)=1\cdot\gamma_2(M_1)\)
for \(k\ge 1\) and \(i=1\),
and thus \(\mathrm{Ker}(\mathrm{V}_1)=G/\gamma_2(G)\), that is \(\varkappa(1)=0\).

In all other cases, we need the concrete relational exponents \(w,z\)
of the representative \(G_a^{(m)}(z,w)\) of the isomorphism class
of the given metabelian \(p\)-group \(G\) of maximal class,
where \(a\) denotes the system \((a(m-k),\ldots,a(m-1))\) of exponents
in the general relation
\[\lbrack y,s_2\rbrack=\prod_{r=1}^k s_{m-r}^{a(m-r)}
\in\lbrack\chi_2(G),\gamma_2(G)\rbrack=\gamma_{m-k}(G)\]
by Miech \cite[p.332, Th.2, (2)]{Mi},
which, however, does not enter the image of the transfer.

For \(m=3\), there are only the two isomorphism classes
of the extra special \(p\)-groups
\(G_0^{(3)}(0,0)\) of exponent \(p\) and
\(G_0^{(3)}(0,1)\) of exponent \(p^2\),
for which we have \(k=0\),
and thus \(a=0\) means the empty family \((a(m-r))_{1\le r\le k}\).\\
In the case \(w=0\), \(z=0\), we obtain the image
\(\mathrm{V}_i(x^jy^\ell\gamma_2(G))=s_2^{wj+z\ell}\cdot 1=1\)
for \(i=1\) and also for \(2\le i\le p+1\),
and thus \(\mathrm{Ker}(\mathrm{V}_i)=G/\gamma_2(G)\), that is \(\varkappa(i)=0\).\\
In the case \(w=1\), \(z=0\), however, we have the image
\(\mathrm{V}_i(x^jy^\ell\gamma_2(G))=s_2^{wj+z\ell}\cdot 1=s_2^j\cdot 1\)
for \(i=1\) and also for \(2\le i\le p+1\).
Therefore, we must have \(j=0\), whereas \(\ell\) can be selected arbitrarily.
Since \(\langle y,\gamma_2(G)\rangle=M_1\), it follows that
\(\mathrm{Ker}(\mathrm{V}_i)=M_1/\gamma_2(G)\), that is \(\varkappa(i)=1\).

It only remains to investigate \(i=1\) for \(m\ge 4\) and \(k=0\),
that is, for metabelian \(p\)-groups \(G\) of maximal class
with an abelian normal subgroup \(M_1\).
According to Blackburn \cite[p.88, Th.4.3]{Bl},
there exist two isomorphism classes
\(G_0^{(m)}(0,0)\), \(G_0^{(m)}(0,1)\) with \(z=0\),
and in general several isomorphism classes
\(G_0^{(m)}(z,0)\) with \(z>0\),
in this situation.\\
In the case \(w=0\), \(z=0\), we have
\(\mathrm{V}_1(x^jy^\ell\gamma_2(G))=s_{m-1}^{wj+z\ell}\cdot 1=1\)
and thus \(\mathrm{Ker}(\mathrm{V}_1)=G/\gamma_2(G)\), that is \(\varkappa(1)=0\).\\
In the case \(w=1\), \(z=0\), we obtain
\(\mathrm{V}_1(x^jy^\ell\gamma_2(G))=s_{m-1}^{wj+z\ell}\cdot 1=s_{m-1}^j\cdot 1\).
Thus, we must have \(j=0\), but \(\ell\) remains arbitrary.
Since \(\langle y,\gamma_2(G)\rangle=M_1\), it follows that
\(\mathrm{Ker}(\mathrm{V}_1)=M_1/\gamma_2(G)\), that is \(\varkappa(1)=1\).\\
In the case \(w=0\), \(z>0\), we have
\(\mathrm{V}_1(x^jy^\ell\gamma_2(G))=s_{m-1}^{wj+z\ell}\cdot 1=s_{m-1}^{z\ell}\cdot 1\).
This enforces \(\ell=0\), whereas \(j\) can be selected arbitrarily.
Since \(\langle x,\gamma_2(G)\rangle=M_2\), it follows that
\(\mathrm{Ker}(\mathrm{V}_1)=M_2/\gamma_2(G)\), that is \(\varkappa(1)=2\).

According to \cite[p.82]{Bl}, the possible maximum of the invariant \(k\) depends on \(m\) and \(p\).
\end{proof}

The irregular images of the transfers in the exceptional case \(p=2\)
exert a decisive influence on the transfer kernels
and cause considerable deviations of the multiplets of transfer types
from the uniform standard case of odd primes \(p\ge 3\).

\begin{theorem}
\label{t:TwoKerTrf}

Let \(G=\langle x,y\rangle\) be a metabelian \(2\)-group of maximal class
and of order \(\lvert G\rvert=2^m\), where \(m\ge 3\).
Suppose that the generators are selected such that
\(x\in G\setminus\chi_2(G)\), if \(m\ge 4\), \(y\in\chi_2(G)\setminus\gamma_2(G)\),
and the relations (\ref{eqn:MaxRel})
are satisfied with exponents \(0\le w,z\le 1\).
Let the maximal normal subgroups \(M_1,\ldots,M_3\) of \(G\) be ordered by
\(M_1=\langle y,\gamma_2(G)\rangle\),
\(M_2=\langle x,\gamma_2(G)\rangle\),
and \(M_3=\langle xy,\gamma_2(G)\rangle\).
Then \(M_1=\chi_2(G)\), if \(m\ge 4\),
and the following statements hold.

\begin{enumerate}

\item
The kernels of the transfers
\(\mathrm{V}_i:G/\gamma_2(G)\longrightarrow M_i/\gamma_2(M_i)\)
for \(1\le i\le 3\) are given by

\begin{eqnarray*}
\label{e:TwoKerDtl}
\mathrm{Ker}(\mathrm{V}_1)&=&
\begin{cases}
G/\gamma_2(G),&\text{ if }\ m\ge 3,\ w=0,\ z=0,\\
M_1/\gamma_2(G),&\text{ if }\ m\ge 3,\ w=1,\ z=0,\\
M_2/\gamma_2(G),&\text{ if }\ m\ge 4,\ w=0,\ z=1,\\
\end{cases}\\
\mathrm{Ker}(\mathrm{V}_2)&=&
\begin{cases}
M_2/\gamma_2(G),&\text{ if }m=3,\ w=1,\ z=0,\\
M_3/\gamma_2(G),&\text{ otherwise},
\end{cases}\\
\mathrm{Ker}(\mathrm{V}_3)&=&
\begin{cases}
M_3/\gamma_2(G),&\text{ if }m=3,\ w=1,\ z=0,\\
M_2/\gamma_2(G),&\text{ otherwise}.
\end{cases}
\end{eqnarray*}

\item
The singulets \(\varkappa(i)\) of transfer types of \(G\)
with \(1\le i\le 3\) are given by

\begin{eqnarray*}
\label{e:TwoSng}
\varkappa(1)&=&
\begin{cases}
0,&\text{ if }m\ge 3,\ G=G_0^{(m)}(0,0)=D(2^m),\\
1,&\text{ if }m\ge 3,\ G=G_0^{(m)}(0,1)=Q(2^m),\\
2,&\text{ if }m\ge 4,\ G=G_0^{(m)}(1,0)=S(2^m),
\end{cases}\\
\varkappa(2)&=&
\begin{cases}
2,&\text{ if }m=3,\ G=G_0^{(3)}(0,1)=Q(8),\\
3,&\text{ otherwise},
\end{cases}\\
\varkappa(3)&=&
\begin{cases}
3,&\text{ if }m=3,\ G=G_0^{(3)}(0,1)=Q(8),\\
2,&\text{ otherwise}.
\end{cases}
\end{eqnarray*}

\item
The multiplet \(\varkappa=(\varkappa(1),\varkappa(2),\varkappa(3))\)
of transfer types of \(G\) is given by

\begin{eqnarray*}
\label{e:TwoMlt}
\varkappa&=&
\begin{cases}
(1,2,3),\ \nu=0,&\text{ if }m=3,\ G=G_0^{(3)}(0,1)=Q(8),\\
(0,3,2),\ \nu=1,&\text{ if }m\ge 3,\ G=G_0^{(m)}(0,0)=D(2^m),\\
(1,3,2),\ \nu=0,&\text{ if }m\ge 4,\ G=G_0^{(m)}(0,1)=Q(2^m),\\
(2,3,2),\ \nu=0,&\text{ if }m\ge 4,\ G=G_0^{(m)}(1,0)=S(2^m).
\end{cases}
\end{eqnarray*}

\end{enumerate}

\end{theorem}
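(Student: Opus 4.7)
The plan is to substitute the explicit transfer formulas from Theorem \ref{t:TwoTrmTrf} into the defining kernel equation $\mathrm{V}_i(x^jy^\ell\gamma_2(G))=1\cdot\gamma_2(M_i)$ and solve for $(j,\ell)\in\{0,1\}^2$ in each of the cases $i=1,2,3$. Once each kernel is identified either as $G/\gamma_2(G)$ or as one of the $M_j/\gamma_2(G)$, the singulets $\varkappa(i)$ can be read off directly, and the multiplet together with the invariant $\nu$ follows immediately.

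First I would record the auxiliary facts from (\ref{eqn:MaxComGrp}) that $\gamma_2(M_1)=1$ (the invariant $k$ vanishes because $G$ is dihedral, quaternion, or semidihedral by the classification cited in the proof of Theorem \ref{t:TwoTrmTrf}) and that $\gamma_2(M_2)=\gamma_2(M_3)=\gamma_3(G)$. Since $s_j=y^{(-2)^{j-1}}$, the element $s_{m-1}$ lies in $\gamma_3(G)$ as soon as $m\ge 4$, while for $m=3$ it coincides with the main commutator $s_2\in\gamma_2(G)\setminus\gamma_3(G)$. With this in hand, the kernel equation for $\mathrm{V}_1$ collapses to the $\mathbb{F}_2$-linear congruence $wj+z\ell\equiv 0\pmod 2$. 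The three admissible pairs $(w,z)\in\{(0,0),(1,0),(0,1)\}$ coming from the classification $D(2^m),Q(2^m),S(2^m)$ (recall the convention $G_a^{(m)}(z,w)$) yield the three kernels $G/\gamma_2(G)$, $M_1/\gamma_2(G)=\langle y,\gamma_2(G)\rangle/\gamma_2(G)$, and $M_2/\gamma_2(G)=\langle x,\gamma_2(G)\rangle/\gamma_2(G)$, respectively, giving the formula for $\varkappa(1)$.

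For $\mathrm{V}_2$ and $\mathrm{V}_3$ I would split into two regimes. When $m\ge 4$, the factor $s_{m-1}^{wj+z\ell}$ of the transfer image already lies in $\gamma_3(G)=\gamma_2(M_i)$ and disappears; the kernel equations reduce to $j+\ell\equiv 0\pmod 2$ for $\mathrm{V}_2$ and $\ell\equiv 0\pmod 2$ for $\mathrm{V}_3$, which identify the kernels as $\langle xy,\gamma_2(G)\rangle/\gamma_2(G)=M_3/\gamma_2(G)$ and $M_2/\gamma_2(G)$, uniformly across $D(2^m),Q(2^m),S(2^m)$. When $m=3$, however, one has $\gamma_3(G)=1$ and $s_{m-1}=s_2$, so the two factors combine into $s_2^{(w-1)j+(z-1)\ell}$ for $\mathrm{V}_2$ and $s_2^{wj+(z-1)\ell}$ for $\mathrm{V}_3$; inspecting the only two admissible pairs $(w,z)\in\{(0,0),(1,0)\}$ at this order (semidihedral groups require $m\ge 4$) reproduces the generic kernels for $D(8)$ but produces the exchange $\varkappa(2)=2$, $\varkappa(3)=3$ for $Q(8)$.

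The main obstacle is exactly this interference at $m=3$: because $\gamma_3(G)$ is trivial and $s_{m-1}$ degenerates to $s_2$, the irregular correction factors $s_2^{-(j+\ell)}$ and $s_2^{-\ell}$ that are invisible in the target for $m\ge 4$ now interact with $s_2^{wj+z\ell}$, producing the exceptional quaternion multiplet $(1,2,3)$ in sharp contrast with the uniform shape $(\varkappa(1),3,2)$ of the $m\ge 4$ cases. Once all kernels have been determined, assembling the singulets into the multiplets $(\varkappa(1),\varkappa(2),\varkappa(3))$ and counting the zero entries to obtain $\nu$ is entirely routine bookkeeping.
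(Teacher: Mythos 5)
Your proposal is correct and follows essentially the same route as the paper: substituting the explicit images from Theorem \ref{t:TwoTrmTrf} into the kernel equation, reducing to linear congruences in $(j,\ell)$ over $\mathbb{F}_2$, and splitting into the regular case $\mathrm{V}_1$, the uniform regime $m\ge 4$, and the degenerate case $m=3$ where $s_{m-1}=s_2$ and $\gamma_3(G)=1$ produce the $Q(8)$ exchange $\varkappa(2)=2$, $\varkappa(3)=3$. The only cosmetic difference is that the paper handles $\mathrm{Ker}(\mathrm{V}_1)$ by citing the $k=0$ case of Theorem \ref{t:MaxKerTrf} rather than re-solving the congruence $wj+z\ell\equiv 0\pmod 2$ directly, which is the same computation.
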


\begin{definition}
\label{d:CanMlt}
We call the multiplets of transfer types \(\varkappa\)
in the preceding theorems \ref{t:MaxKerTrf} and \ref{t:TwoKerTrf}
\textit{canonical},
since they only appear in this form,
if the group generators \(x,y\) are selected as described in the assumptions
and the maximal normal subgroups are arranged in the defined order.
\end{definition}

\begin{proof}

By theorem \ref{t:TwoTrmTrf},
the images of the transfers for elements of the shape
\(g\equiv x^jy^\ell\pmod{\gamma_2(G)}\), \(0\le j,\ell\le 1\),
and in dependence on the invariant \(m\),
are given by

\begin{eqnarray*}
\label{e:TwoTrfRcl}
\mathrm{V}_1(x^jy^\ell\gamma_2(G))&=&s_{m-1}^{wj+z\ell}\cdot 1,\\
\mathrm{V}_2(x^jy^\ell\gamma_2(G))&=&s_{m-1}^{wj+z\ell}s_2^{-(j+\ell)}\gamma_3(G)=
\begin{cases}
s_2^{(w-1)j+(z-1)\ell}\cdot 1,&\text{ if }m=3,\\
s_2^{-(j+\ell)}\gamma_3(G),&\text{ if }m\ge 4,
\end{cases}\\
\mathrm{V}_3(x^jy^\ell\gamma_2(G))&=&s_{m-1}^{wj+z\ell}s_2^{-\ell}\gamma_3(G)=
\begin{cases}
s_2^{wj+(z-1)\ell}\cdot 1,&\text{ if }m=3,\\
s_2^{-\ell}\gamma_3(G),&\text{ if }m\ge 4.
\end{cases}
\end{eqnarray*}

\noindent
To determine the transfer kernel
\(\mathrm{Ker}(\mathrm{V}_i)\) for \(1\le i\le 3\),
we have to solve the equation
\(\mathrm{V}_i(x^jy^\ell\gamma_2(G))=1\cdot\gamma_2(M_i)\)
with respect to the element \(x^jy^\ell\), that is, with respect to \(j,\ell\),
for the corresponding image of the transfer \(\mathrm{V}_i\).

Since the image of the first transfer \(\mathrm{V}_1\) is regular,
we can use the result for \(\mathrm{Ker}(\mathrm{V}_1)\)
of theorem \ref{t:MaxKerTrf} with \(k=0\).

The images of the second and third transfer
\(\mathrm{V}_2\), \(\mathrm{V}_3\)
are independent from the relational exponents \(w,z\) in the case \(m\ge 4\),
that is, they are equal for all groups with index of nilpotency \(m\ge 4\).\\
The equation
\(\mathrm{V}_2(x^jy^\ell\gamma_2(G))=s_2^{-(j+\ell)}\gamma_3(G)=1\cdot \gamma_3(G)\)
enforces \(j+\ell=0\), that is \(j=\ell\),
because we are dealing with \(2\)-groups,
and \(\langle xy,\gamma_2(G)\rangle=M_3\) implies
\(\mathrm{Ker}(\mathrm{V}_2)=M_3/\gamma_2(G)\), \(\varkappa(2)=3\).\\
The equation
\(\mathrm{V}_3(x^jy^\ell\gamma_2(G))=s_2^{-\ell}\gamma_3(G)=1\cdot \gamma_3(G)\)
implies \(\ell=0\), whereas \(j\) remains arbitrary.
Since \(\langle x,\gamma_2(G)\rangle=M_2\), we obtain
\(\mathrm{Ker}(\mathrm{V}_3)=M_2/\gamma_2(G)\), \(\varkappa(3)=2\).

For \(m=3\), we have only two isomorphism classes
of metabelian \(2\)-groups of maximal class,
\(G_0^{(3)}(0,0)=D(8)\) and
\(G_0^{(3)}(0,1)=Q(8)\).

In the case \(w=0\), \(z=0\), we obtain\\
\(\mathrm{V}_2(x^jy^\ell\gamma_2(G))=s_2^{(w-1)j+(z-1)\ell}\cdot 1=s_2^{-(j+\ell)}\)
and\\
\(\mathrm{V}_3(x^jy^\ell\gamma_2(G))=s_2^{wj+(z-1)\ell}\cdot 1=s_2^{-\ell}\)
and thus, similarly as for \(m\ge 4\),\\
\(\mathrm{Ker}(\mathrm{V}_2)=M_3/\gamma_2(G)\), \(\varkappa(2)=3\)
and \(\mathrm{Ker}(\mathrm{V}_3)=M_2/\gamma_2(G)\), \(\varkappa(3)=2\).

In the case \(w=1\), \(z=0\), however, we have\\
\(\mathrm{V}_2(x^jy^\ell\gamma_2(G))=s_2^{(w-1)j+(z-1)\ell}\cdot 1=s_2^{-\ell}\)
and\\
\(\mathrm{V}_3(x^jy^\ell\gamma_2(G))=s_2^{wj+(z-1)\ell}\cdot 1=s_2^{j-\ell}\)
and thus, conversely,\\
\(\mathrm{Ker}(\mathrm{V}_2)=M_2/\gamma_2(G)\), \(\varkappa(2)=2\)
and \(\mathrm{Ker}(\mathrm{V}_3)=M_3/\gamma_2(G)\), \(\varkappa(3)=3\).\\
\end{proof}

In table \ref{tab:MltTrf} we compare
the multiplets of transfer types
of metabelian \(p\)-groups of maximal class
for \(p=2\) and \(p\ge 3\).
Here, \(\kappa\) denotes the multiplet of coarse transfer types
expressed with the aid of condition (B) by Taussky \cite[p.435]{Ta},
which is given by Kisilevsky \cite[p.273, Th.2]{Ki2}
and by Benjamin and Snyder \cite[p.163, \S 2]{BeSn}.
Further, \(1\le z\le p-1\) is one of the exponents in the relations (\ref{eqn:MaxRel}).
Examples of number fields with these principalisation types are given in \cite{Ma, Ma2}.

\renewcommand{\arraystretch}{1.5}
\begin{table}[ht]
\caption{Transfer types of corresponding \(p\)-groups for \(p=2\) and \(p\ge 3\)}
\label{tab:MltTrf}
\begin{center}
\begin{tabular}{|ccc|c|cc|}
\hline
 \multicolumn{3}{|c|}{\(p=2\)}                          &              & \multicolumn{2}{|c|}{\(p\ge 3\)}                                      \\
\hline
 \(\kappa\) & \(\varkappa\) & \(2\)-group               & \(m\)        & \(\varkappa\)                                 & \(p\)-group           \\
\hline
 \((000)\)  & \((000)\)     & \(C(2)\times C(2)\)       & \(2\)        & \((\overbrace{0,\ldots,0}^{p+1\text{ times}})\) & \(C(p)\times C(p)\) \\
 \((123)\)  & \((123)\)     & \(G_0^{(3)}(0,1)=Q(8)\)   & \(3\)        & \((\overbrace{1,\ldots,1}^{p+1\text{ times}})\) & \(G_0^{(3)}(0,1)\)  \\
 \((0BB)\)  & \((032)\)     & \(G_0^{(m)}(0,0)=D(2^m)\) & \(\ge 3\)    & \((\overbrace{0,\ldots,0}^{p+1\text{ times}})\) & \(G_0^{(m)}(0,0)\)  \\
 \((1BB)\)  & \((132)\)     & \(G_0^{(m)}(0,1)=Q(2^m)\) & \(\ge 4\)    & \((1,\overbrace{0,\ldots,0}^{p\text{ times}})\) & \(G_0^{(m)}(0,1)\)  \\
 \((BBB)\)  & \((232)\)     & \(G_0^{(m)}(1,0)=S(2^m)\) & \(\ge 4\)    & \((2,\overbrace{0,\ldots,0}^{p\text{ times}})\) & \(G_0^{(m)}(z,0)\)  \\
\hline
\end{tabular}
\end{center}
\end{table}

%--------------------------------------------------------------------------------

\subsection{Combinatorially possible transfer types of \(2\)-groups}
\label{ss:DyadicTrf}

In this subsection, we arrange the combinatorially possible
\(S_3\)-orbits of the \(4^3\) triplets \(\varkappa\in\lbrack 0,3\rbrack^3\)
according to increasing cardinality of the image and decreasing number of fixed points.
Table \ref{tab:TwoPrtTrf} shows the partial triplets
and table \ref{tab:TwoTotTrf} the total triplets as the possible transfer types
of metabelian \(2\)-groups \(G\) with \(G/\gamma_2(G)\) of type \((2,2)\),
resp. principalisation types
of base fields \(K\) with \(2\)-class group \(\mathrm{Cl}_2(K)\) of type \((2,2)\).
The orbits are divided into sections, denoted by letters,
and identified by ordinal numbers.

We denote by \(o(\varkappa)=(\vert\varkappa^{-1}\lbrace i\rbrace\vert)_{0\le i\le 3}\)
the family of occupation numbers of the selected orbit representative \(\varkappa\) and by
\(F=\lbrace 1\le i\le 3\mid\varkappa(i)=i\rbrace\) the set of fixed points of \(\varkappa\).
In the characterising property, needed for equal numbers of fixed points, let
\(I=\lbrace\varkappa(i)\mid 1\le i\le 3\rbrace\) be the image of \(\varkappa\) and
\(Z=\varkappa^{-1}\lbrace 0\rbrace\) the preimage of zero under \(\varkappa\).

If an orbit can be realised as a transfer type,
then a suitable \(2\)-group \(G\) is given,
according to theorem \ref{t:TwoKerTrf}.

\newpage

\renewcommand{\arraystretch}{1.2}
\begin{table}[ht]
\caption{The seven \(S_3\)-orbits of triplets \(\varkappa\in\lbrack 1,3\rbrack^3\) with \(\nu=0\)}
\label{tab:TwoPrtTrf}
\begin{center}
\begin{tabular}{|rr|c|cccc|c|}
\hline
      &     & repres.       & occupation       & fixed   & charact.                       & cardinality           & realising                      \\
 Sec. & Nr. & of orbit      & numbers          & points  & property                       & of orbit              & \(2\)-group                    \\
      &     & \(\varkappa\) & \(o(\varkappa)\) & \(|F|\) &                                & \(|\varkappa^{S_3}|\) & \(G\)                          \\
\hline
    A &   1 & \((111)\)     & \((0300)\)       & \(1\)   & constant                       &                 \(3\) & impossible                     \\
\hline
    B &   2 & \((121)\)     & \((0210)\)       & \(2\)   & almost                         &                 \(6\) & impossible                     \\
    B &   3 & \((112)\)     & \((0210)\)       & \(1\)   & con-                           &                 \(6\) & impossible                     \\
    S &   4 & \((211)\)     & \((0210)\)       & \(0\)   & stant                          &                 \(6\) & \(G_0^{(m)}(1,0)=S(2^m)\), \(m\ge 4\) \\
\hline
    Q &   5 & \((123)\)     & \((0111)\)       & \(3\)   & identity                       &                 \(1\) & \(G_0^{(3)}(0,1)=Q(8)\), \(m=3\) \\
    Q &   6 & \((132)\)     & \((0111)\)       & \(1\)   & transposition                  &                 \(3\) & \(G_0^{(m)}(0,1)=Q(2^m)\), \(m\ge 4\) \\
    C &   7 & \((231)\)     & \((0111)\)       & \(0\)   & \(3\)-cycle                    &                 \(2\) & impossible                     \\
\hline
      &     &               &                  &         &                  Total number: &                \(27\) &                                \\
\hline
\end{tabular}
\end{center}
\end{table}

\renewcommand{\arraystretch}{1.2}
\begin{table}[ht]
\caption{The nine \(S_3\)-orbits of triplets \(\varkappa\in\lbrack 0,3\rbrack^3\setminus\lbrack 1,3\rbrack^3\) with \(1\le\nu\le 3\)}
\label{tab:TwoTotTrf}
\begin{center}
\begin{tabular}{|rr|c|cccc|c|}
\hline
      &     & repres.       & occupation       & fixed   & charact.                       & cardinality           & realising                      \\
 Sec. & Nr. & of orbit      & numbers          & points  & property                       & of orbit              & \(2\)-group                    \\
      &     & \(\varkappa\) & \(o(\varkappa)\) & \(|F|\) &                                & \(|\varkappa^{S_3}|\) & \(G\)                          \\
\hline
    a &   1 & \((000)\)     & \((3000)\)       & \(0\)   & constant                       &                 \(1\) & \(C(2)\times C(2)\),     \(m=2\) \\
\hline
    b &   2 & \((100)\)     & \((2100)\)       & \(1\)   &                                &                 \(3\) & impossible                     \\
    b &   3 & \((010)\)     & \((2100)\)       & \(0\)   &                                &                 \(6\) & impossible                     \\
\hline
    c &   4 & \((110)\)     & \((1200)\)       & \(1\)   &                                &                 \(6\) & impossible                    \\
    c &   5 & \((011)\)     & \((1200)\)       & \(0\)   &                                &                 \(3\) & impossible                    \\
\hline
    e &   6 & \((120)\)     & \((1110)\)       & \(2\)   & identity with \(0\)            &                 \(3\) & impossible                    \\
    e &   7 & \((021)\)     & \((1110)\)       & \(1\)   &                                &                 \(6\) & impossible                    \\
    d &   8 & \((210)\)     & \((1110)\)       & \(0\)   & \(Z\not\subset I\)             &                 \(3\) & \(G_0^{(m)}(0,0)=D(2^m)\), \(m\ge 3\) \\
    e &   9 & \((012)\)     & \((1110)\)       & \(0\)   & \(Z\subset I\)                 &                 \(6\) & impossible                    \\
\hline
      &     &               &                  &         &                  Total number: &          \(37=64-27\) &                                \\
\hline
\end{tabular}
\end{center}
\end{table}

\noindent
S.4 is the transfer type of the semi-dihedral group \(S(2^m)=G^{(m)}_0(1,0)\) with \(m\ge 4\),
Q.5 the transfer type of the quaternion group \(Q(8)=G^{(3)}_0(0,1)\), \(m=3\),
Q.6 the transfer type of the generalised quaternion group \(Q(2^m)=G^{(m)}_0(0,1)\), \(m\ge 4\),
a.1 the transfer type of the elementary abelian bicyclic \(2\)-group of type \((2,2)\), \(m=2\),
and, finally,
d.8 is the transfer type of the dihedral group \(D(2^m)=G^{(m)}_0(0,0)\) with \(m\ge 3\).

%--------------------------------------------------------------------------------

\section{Transfers of a metabelian \(3\)-group of non-maximal class}
\label{s:LowTrf}

The transfer kernels \(\mathrm{Ker}(\mathrm{V}_i)\) and transfer types \(\varkappa(G)\)
of metabelian \(3\)-groups \(G\) of non-maximal class
with abelianisation \(G/\gamma_2(G)\) of type \((3,3)\)
have been determined in Nebelung's thesis \cite{Ne}.
These transfer types are summarised
in the tables \ref{tab:TriPrtTrf} and \ref{tab:TriTotTrf}
of subsection \ref{ss:TriadicTrf}.

However, our concrete numerical investigation \cite{Ma}
of all \(4\,596\) quadratic base fields \(K=\mathbb{Q}(\sqrt{D})\)
with \(3\)-class group of type \((3,3)\) and
discriminant in the range \(-10^6<D<10^7\)
has revealed that supplementary criteria are necessary
to distinguish between two kinds of the transfer types
\(\mathrm{d}.19\), \(\mathrm{d}.23\), and \(\mathrm{d}.25\)
of the second \(3\)-class group \(G=\mathrm{Gal}(\mathrm{F}_3^2(K)\vert K)\)
of arbitrary base fields \(K\).
To be able to describe these two kinds of transfer types of section \(\mathrm{d}\),
we must recall some results of \cite{Ne}.

The metabelian \(3\)-groups \(G\) of non-maximal class,
that is, of coclass \(\mathrm{cc}(G)\ge 2\),
cannot be CF-groups,
since they must have at least one bicyclic factor \(\gamma_3(G)/\gamma_4(G)\).
Similarly as in section \(2\) of \cite{Ma}, we declare
an isomorphism invariant \(e=e(G)\) of \(G\) by
\(e+1=\min\lbrace 3\le j\le m\mid 1\le\lvert\gamma_j(G)/\gamma_{j+1}(G)\rvert\le 3\rbrace\).
This invariant \(2\le e\le m-1\) characterises
the first cyclic factor \(\gamma_{e+1}(G)/\gamma_{e+2}(G)\) of the lower central series of \(G\),
except \(\gamma_2(G)/\gamma_3(G)\), which is always cyclic.
\(e\) can be calculated
from the \(3\)-exponent \(n\) of the order \(\lvert G\rvert=3^n\)
and the class \(\mathrm{cl}(G)=m-1\), resp. the index \(m\), of nilpotency of \(G\)
by the formula \(e=n-m+2\).
Since the coclass of \(G\) is given by \(\mathrm{cc}(G)=n-\mathrm{cl}(G)=n-m+1\),
we have the relation \(e=\mathrm{cc}(G)+1\).

The isomorphism classes of all metabelian \(3\)-groups \(G\)
with abelianisation \(G/\gamma_2(G)\) of type \((3,3)\)
can be represented as nodes of a directed tree
with root \(\mathrm{C}(3)\times\mathrm{C}(3)\) \cite[p.181 ff]{Ne}.
The tree contains infinite chains,
all of whose nodes have the same transfer type.
There exists a single chain of transfer type \(\mathrm{a}.1\),
giving rise to all groups of maximal class \cite{Bl} with invariant \(e=2\).
This is the main line of the unique coclass tree in the
coclass graph \(\mathcal{G}(3,1)\) \cite{As,LgMk,EiLg,DEF}.
Further there are three chains of the transfer types
\(\mathrm{b}.10\), \(\mathrm{c}.21\), and \(\mathrm{c}.18\),
giving rise to all groups of second maximal class \cite{As} with invariant \(e=3\).
These are main lines of coclass trees in coclass graph \(\mathcal{G}(3,2)\).
Eventually, for each \(r\ge 3\),
there are finitely many chains of the transfer types
\(\mathrm{b}.10\), \(\mathrm{d}.23\), \(\mathrm{d}.25\), and \(\mathrm{d}.19\),
giving rise to groups of lower than second maximal class with invariant \(e=r+1\).
These are main lines of coclass trees in coclass graph \(\mathcal{G}(3,r)\)
with a certain periodicity of length two with respect to \(r\).

Only the transfer types \(\mathrm{d}.19\), \(\mathrm{d}.23\), and \(\mathrm{d}.25\)
can occur for both,
\textit{internal} nodes on the corresponding chains with \(e\ge 4\)
and \textit{terminal} successors
of nodes on all chains with transfer type \(\mathrm{b}.10\).
We will show that
the distinction of internal and terminal nodes is generally possible
with the aid of the canonical multiplet \(\varkappa\) of transfer types of \(G\)
for arbitrary base fields \(K\),
and in particular by means of the parity of the index \(m\) of nilpotency of \(G\)
for quadratic base fields \(K=\mathbb{Q}(\sqrt{D})\).

%--------------------------------------------------------------------------------

\subsection{Images of the transfers}
\label{ss:LowTrmTrf}

For a group \(G\) of non-maximal class
we need a generalisation of the group \(\chi_2(G)\).
Denoting by \(m\) the index of nilpotency of \(G\),
we let \(\chi_j(G)\) with \(2\le j\le m-1\)
be the centralisers
of two-step factor groups \(\gamma_j(G)/\gamma_{j+2}(G)\)
of the lower central series, that is,
the biggest subgroups of \(G\) with the property
\(\lbrack\chi_j(G),\gamma_j(G)\rbrack\le\gamma_{j+2}(G)\).
They form an ascending chain of characteristic subgroups of \(G\),
\(\gamma_2(G)\le\chi_2(G)\le\ldots\le\chi_{m-2}(G)<\chi_{m-1}(G)=G\),
which contain the commutator group \(\gamma_2(G)\).
\(\chi_j(G)\) coincides with \(G\), if and only if \(j\ge m-1\).
Similarly as in section 2 of \cite{Ma},
we characterise the smallest two-step centraliser
different from the commutator group
by an isomorphism invariant
\(s=s(G)=\min\lbrace 2\le j\le m-1\mid\chi_j(G)>\gamma_2(G)\rbrace\).

The assumptions of the following theorem \ref{t:LowTrmTrf}
for a metabelian \(3\)-group \(G\) of non-maximal class
with abelianisation \(G/\gamma_2(G)\) of type \((3,3)\)
can always be satisfied, according to \cite{Ne}.

\begin{theorem}
\label{t:LowTrmTrf}

Let \(G\) be a metabelian \(3\)-group of non-maximal class
with abelianisation \(G/\gamma_2(G)\) of type \((3,3)\).
Assume that \(G\) has order \(\lvert G\rvert=3^n\),
class \(\mathrm{cl}(G)=m-1\), and invariant \(e=n-m+2\ge 3\),
where \(4\le m<n\le 2m-3\).
Let generators of \(G=\langle x,y\rangle\) be selected such that
\(\gamma_3(G)=\langle y^3,x^3,\gamma_4(G)\rangle\),
\(x\in G\setminus\chi_s(G)\), if \(s<m-1\),
and \(y\in\chi_s(G)\setminus\gamma_2(G)\).
Suppose that the order of the four maximal normal subgroups of \(G\) is defined by
\(M_i=\langle g_i,\gamma_2(G)\rangle\) with
\(g_1=y\), \(g_2=x\), \(g_3=xy\), and \(g_4=xy^{-1}\).
Let the main commutator of \(G\) be declared by
\(s_2=t_2=\lbrack y,x\rbrack\in\gamma_2(G)\)
and higher commutators recursively by
\(s_j=\lbrack s_{j-1},x\rbrack\), \(t_j=\lbrack t_{j-1},y\rbrack\in\gamma_j(G)\)
for \(j\ge 3\).
Starting with the powers \(\sigma_3=y^3\), \(\tau_3=x^3\in\gamma_3(G)\), let
\(\sigma_j=\lbrack\sigma_{j-1},x\rbrack\), \(\tau_j=\lbrack\tau_{j-1},y\rbrack\in\gamma_j(G)\)
for \(j\ge 4\).
With exponents \(-1\le\alpha,\beta,\gamma,\delta,\rho\le 1\), let
the following relations be satisfied

\begin{equation}
\label{eqn:LowRel}
s_2^3=\sigma_4\sigma_{m-1}^{-\rho\beta}\tau_4^{-1},\quad
\ s_3\sigma_3\sigma_4=\sigma_{m-2}^{\rho\beta}\sigma_{m-1}^\gamma\tau_e^\delta,\quad
\ t_3^{-1}\tau_3\tau_4=\sigma_{m-2}^{\rho\delta}\sigma_{m-1}^\alpha\tau_e^\beta,\quad
\ \tau_{e+1}=\sigma_{m-1}^{-\rho}.
\end{equation}

\noindent
Finally, let \(\lbrack\chi_s(G),\gamma_e(G)\rbrack=\gamma_{m-k}(G)\)
with \(0\le k\le 1\).\\
If the cosets are represented in the form
\(g\equiv x^jy^\ell\pmod{\gamma_2(G)}\) with \(-1\le j,\ell\le 1\),
then the images of the transfers
\(\mathrm{V}_i:G/\gamma_2(G)\longrightarrow M_i/\gamma_2(M_i)\)
are given by

\begin{eqnarray*}
\label{e:LowTrfDtl}
\mathrm{V}_1(g\gamma_2(G))&=&
\begin{cases}
\sigma_{m-1}^{\gamma\ell}\tau_e^{\delta\ell}\tau_3^j\gamma_2(M_1),
&\text{ if }\lbrack\chi_s(G),\gamma_e(G)\rbrack=1,\ k=0,\\
\sigma_{m-2}^{\rho\beta\ell}\sigma_{m-1}^{(\gamma-\rho\beta)\ell}\tau_e^{\delta\ell}\tau_3^j\gamma_2(M_1),
&\text{ if }\lbrack\chi_s(G),\gamma_e(G)\rbrack=\gamma_{m-1}(G),\ k=1,\ m\ge 5,
\end{cases}\\
\mathrm{V}_2(g\gamma_2(G))&=&
\begin{cases}
\sigma_3^\ell\sigma_{m-1}^{\alpha j}\tau_e^{\beta j}\gamma_2(M_2),
&\text{ if }\lbrack\chi_s(G),\gamma_e(G)\rbrack=1,\ k=0,\\
\sigma_3^\ell\sigma_{m-2}^{\rho\delta j}\sigma_{m-1}^{(\alpha+\rho\beta)j}\tau_e^{\beta j}\gamma_2(M_2),
&\text{ if }\lbrack\chi_s(G),\gamma_e(G)\rbrack=\gamma_{m-1}(G),\ k=1,\ m\ge 5,
\end{cases}\\
\mathrm{V}_i(g\gamma_2(G))&=&
\sigma_3^\ell\tau_3^j\gamma_2(M_i)
\text{ for }3\le i\le 4.
\end{eqnarray*}

\end{theorem}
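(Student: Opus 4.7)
The plan is to invoke the transfer formula (\ref{eqn:MaxTrf}) with $p=3$ on each of the four maximal normal subgroups $M_i$ and each of the two generators $x,y$, then extend by the homomorphism property
\[
\mathrm{V}_i(x^j y^\ell \gamma_2(G)) = \mathrm{V}_i(x\gamma_2(G))^j \cdot \mathrm{V}_i(y\gamma_2(G))^\ell,
\]
which is permissible because the target $M_i/\gamma_2(M_i)$ is abelian. The four relations (\ref{eqn:LowRel}), together with the standard commutator identities of \cite{Bl,Mi}, will convert the raw expressions into the claimed normal forms.

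The easy cases are $M_3=\langle xy,\gamma_2(G)\rangle$ and $M_4=\langle xy^{-1},\gamma_2(G)\rangle$. Here both generators lie outside $M_i$, so (\ref{eqn:MaxTrf}) immediately gives $\mathrm{V}_i(x\gamma_2(G)) = x^3\gamma_2(M_i) = \tau_3\gamma_2(M_i)$ and $\mathrm{V}_i(y\gamma_2(G)) = y^3\gamma_2(M_i) = \sigma_3\gamma_2(M_i)$, yielding the uniform expression $\sigma_3^\ell \tau_3^j\gamma_2(M_i)$ for $i\in\{3,4\}$.

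For $M_1=\langle y,\gamma_2(G)\rangle$ one has $x\notin M_1$ and $y\in M_1$, so $\mathrm{V}_1(x\gamma_2(G))=\tau_3\gamma_2(M_1)$ directly, whereas $\mathrm{V}_1(y\gamma_2(G))=y^{\mathrm{S}_3(x)}\gamma_2(M_1)$ must be expanded. Writing $X=x-1$ in $\mathbb{Z}[G]$, the symbolic identity $\mathrm{S}_3(x)=3+3X+X^2$ converts this expression, inside the abelian subgroup $\gamma_2(G)$, into $\sigma_3\cdot s_2^3\cdot s_3$, where $s_2=y^X$, $s_3=y^{X^2}$, and $\sigma_3=y^3$. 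Substituting the first relation $s_2^3=\sigma_4\sigma_{m-1}^{-\rho\beta}\tau_4^{-1}$ and the second relation $s_3\sigma_3\sigma_4=\sigma_{m-2}^{\rho\beta}\sigma_{m-1}^\gamma\tau_e^\delta$ of (\ref{eqn:LowRel}) yields $\sigma_{m-2}^{\rho\beta}\sigma_{m-1}^{\gamma-\rho\beta}\tau_e^\delta\tau_4^{-1}\gamma_2(M_1)$. Since $\tau_4=[\tau_3,y]\in[\gamma_2(G),y]=\gamma_2(M_1)$, the factor $\tau_4^{-1}$ disappears from the image, and the dichotomy between $k=0$ and $k=1$ reflects whether the remaining $\sigma_{m-2}^{\rho\beta}$ is absorbed by $\gamma_2(M_1)=\gamma_{m-k}(G)$. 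The case $M_2=\langle x,\gamma_2(G)\rangle$ is treated symmetrically under the swap $x\leftrightarrow y$ (and $\sigma\leftrightarrow\tau$, $\beta\leftrightarrow\delta$, $\gamma\leftrightarrow\alpha$), invoking the third relation of (\ref{eqn:LowRel}) in place of the second.

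The main obstacle will be the meticulous bookkeeping in the symbolic power computation inside the ambient non-abelian group: verifying that every correction term arising from non-commutativity of $y$ with the $s_\ell$ (and of $x$ with the $t_\ell$) actually lies in $\gamma_2(M_i)$ and hence vanishes in the image; confirming that the reordering of the double product implicit in $y^{\mathrm{S}_3(x)}$ obeys the binomial identity used in Theorem \ref{t:MaxTrmTrf}; and, above all, pinning down precisely how $\gamma_2(M_1)$ and $\gamma_2(M_2)$ sit inside the lower central series of $G$. This last point, which governs the $k=0$ versus $k=1$ split, relies on the description of the two-step centralisers $\chi_s(G)$ for non-maximal coclass furnished by \cite{Ne}, and is the step where the bicyclic factor $\gamma_3(G)/\gamma_4(G)$ forces genuinely new behaviour compared to the maximal-class analysis of Theorem \ref{t:MaxTrmTrf}.
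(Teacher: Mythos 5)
Your overall skeleton is the same as the paper's: expand the trace exponent as $u^{\mathrm{S}_3(h)}\equiv u^3\lbrack u,h\rbrack^3\lbrack\lbrack u,h\rbrack,h\rbrack\pmod{\gamma_2(N)}$ (the paper proves exactly this congruence as a lemma rather than appealing to symbolic powers, since $y$ does not lie in an abelian normal subgroup and the correction terms must be shown to fall into $\gamma_2(M_i)$), substitute the relations (\ref{eqn:LowRel}), and absorb $\tau_4$ into $\gamma_2(M_1)$ resp.\ $\sigma_4$ into $\gamma_2(M_2)$. Your computation of $\mathrm{V}_1(y\gamma_2(G))=\sigma_{m-2}^{\rho\beta}\sigma_{m-1}^{\gamma-\rho\beta}\tau_e^{\delta}\gamma_2(M_1)$ and the treatment of $M_3,M_4$ are correct and agree with the paper. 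But there are two genuine gaps.

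First, the claim that $M_2$ is ``treated symmetrically under the swap $x\leftrightarrow y$, $\sigma\leftrightarrow\tau$, $\beta\leftrightarrow\delta$, $\gamma\leftrightarrow\alpha$'' is false. The relations (\ref{eqn:LowRel}) are not symmetric under this swap (e.g.\ $\tau_{e+1}=\sigma_{m-1}^{-\rho}$, and the first relation mixes $\sigma_4$ and $\tau_4$ asymmetrically), and the generating sets $\gamma_2(M_1)=\langle t_3,\tau_4,\ldots,\tau_{e+1}\rangle$ and $\gamma_2(M_2)=\langle s_3,\sigma_4,\ldots,\sigma_{m-1}\rangle$ are not exchanged by it. The actual computation uses $\lbrack x,y\rbrack=s_2^{-1}$ and $\lbrack\lbrack x,y\rbrack,y\rbrack=t_3^{-1}$, so $\mathrm{V}_2(x\gamma_2(G))=\tau_3 s_2^{-3}t_3^{-1}\gamma_2(M_2)$; inverting the \emph{first} relation contributes $\sigma_{m-1}^{+\rho\beta}$, giving the exponent $\alpha+\rho\beta$ on $\sigma_{m-1}$ --- note this involves $\beta$, not $\delta$, and a plus sign, neither of which your swap would produce. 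Second, your explanation of the $k=0$ versus $k=1$ dichotomy --- ``whether $\sigma_{m-2}^{\rho\beta}$ is absorbed by $\gamma_2(M_1)=\gamma_{m-k}(G)$'' --- imports the maximal-class formula (\ref{eqn:MaxComGrp}) into the coclass $\ge 2$ setting, where it is no longer valid: here $\gamma_2(M_1)=\langle t_3,\tau_4,\ldots,\tau_{e+1}\rangle$, and $\sigma_{m-2}$ does not lie in it. Moreover, under your reading the term would survive precisely when $k=0$, contradicting the simplified formula you are trying to prove. The correct mechanism is that the relation $\lbrack y,\tau_e\rbrack=\tau_{e+1}^{-1}=\sigma_{m-1}^{\rho}$ makes $k=0$ equivalent to $\rho=0$, so the terms $\sigma_{m-2}^{\rho\beta\ell}$, $\sigma_{m-2}^{\rho\delta j}$ and the cross-terms $\mp\rho\beta$ in the $\sigma_{m-1}$-exponents vanish identically at the level of exponents, not by absorption into a derived subgroup.
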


\begin{proof}

First we prove a formula for the third trace element
acting as symbolic exponent.
Let \(G\) be an arbitrary group
with a normal subgroup \(N<G\).
For \(u\in N\) and \(h\in G\setminus N\) we have

\begin{equation}
\label{eqn:LowTrf}
u^{\mathrm{S}_3(h)}
\equiv u^3\lbrack u,h\rbrack^3\lbrack\lbrack u,h\rbrack,h\rbrack
\pmod{\gamma_2(N)}.
\end{equation}

\noindent
This follows from
\(u^{\mathrm{S}_3(h)}=u^{1+h+h^2}=u\cdot h^{-1}uh\cdot h^{-2}uh^2\) and\\
\(u^3\lbrack u,h\rbrack^3\lbrack\lbrack u,h\rbrack,h\rbrack
=u^3\lbrack u,h\rbrack^3\lbrack u,h\rbrack^{-1}h^{-1}\lbrack u,h\rbrack h\)\\
\(=u^3\lbrack u,h\rbrack^2h^{-1}\cdot u^{-1}h^{-1}uh\cdot h
=u^3\cdot u^{-1}h^{-1}uh\cdot u^{-1}h^{-1}uh\cdot h^{-1}\cdot u^{-1}h^{-1}uh^2\)\\
\(=u^2\cdot h^{-1}uhu^{-1}\cdot h^{-2}uh^2
=u^2\cdot u^{-1}h^{-1}uh\lbrack h^{-1}uh,u^{-1}\rbrack\cdot h^{-2}uh^2\)\\
\(\equiv u\cdot h^{-1}uh\cdot h^{-2}uh^2\pmod{\gamma_2(N)}\),
since \(h^{-1}uh,u^{-1}\in N\), and thus \(\lbrack h^{-1}uh,u^{-1}\rbrack\in\lbrack N,N\rbrack=\gamma_2(N)\).

For the first transfer \(\mathrm{V}_1\) we have
\(x\in G\setminus M_1\) and \(y\in M_1\), and thus\\
\(\mathrm{V}_1(x\gamma_2(G))=x^3\gamma_2(M_1)=\tau_3\gamma_2(M_1)\) for the generator \(x\).\\
Now we use formula(\ref{eqn:LowTrf}) and the relations (\ref{eqn:LowRel}) for the generator \(y\),\\
\(\mathrm{V}_1(y\gamma_2(G))=y^{\mathrm{S}_3(x)}\gamma_2(M_1)
=y^3\lbrack y,x\rbrack^3\lbrack\lbrack y,x\rbrack,x\rbrack\gamma_2(M_1)
=y^3 s_2^3s_3\gamma_2(M_1)\)\\
\(=\sigma_3\cdot \sigma_4\sigma_{m-1}^{-\rho\beta}\tau_4^{-1}\cdot \sigma_3^{-1}\sigma_4^{-1}\sigma_{m-2}^{\rho\beta}\sigma_{m-1}^\gamma\tau_e^\delta\gamma_2(M_1)
=\sigma_{m-2}^{\rho\beta}\sigma_{m-1}^{\gamma-\rho\beta}\tau_e^\delta\gamma_2(M_1)\),\\
since \(\tau_4\in\gamma_2(M_1)=\langle t_3,\tau_4,\ldots,\tau_{e+1}\rangle\),
by \cite[Cor.4.1.1]{Ma}.

For the second transfer \(\mathrm{V}_2\) we have
\(x\in M_2\) und \(y\in G\setminus M_2\), and thus\\
\(\mathrm{V}_2(y\gamma_2(G))=y^3\gamma_2(M_2)=\sigma_3\gamma_2(M_2)\) for the generator \(y\).\\
Now we use formula (\ref{eqn:LowTrf}) and the relations (\ref{eqn:LowRel}) for the generator \(x\),\\
\(\mathrm{V}_2(x\gamma_2(G))=x^{\mathrm{S}_3(y)}\gamma_2(M_2)
=x^3\lbrack x,y\rbrack^3\lbrack\lbrack x,y\rbrack,y\rbrack\gamma_2(M_2)
=x^3s_2^{-3}t_3^{-1}\gamma_2(M_2)\)\\
\(=\tau_3\cdot \sigma_4^{-1}\sigma_{m-1}^{\rho\beta}\tau_4\cdot \tau_3^{-1}\tau_4^{-1}\sigma_{m-2}^{\rho\delta}\sigma_{m-1}^\alpha\tau_e^\beta\gamma_2(M_2)
=\sigma_{m-2}^{\rho\delta}\sigma_{m-1}^{\alpha+\rho\beta}\tau_e^\beta\gamma_2(M_2)\),\\
since
\(\lbrack\lbrack x,y\rbrack,y\rbrack=\lbrack s_2^{-1},y\rbrack
=\lbrack s_2,y\rbrack^{-s_2^{-1}}=\left(t_3^{-1}\right)^{s_2^{-1}}=t_3^{-1}\), by the formula for inverses,\\
and \(\sigma_4\in\gamma_2(M_2)=\langle s_3,\sigma_4,\ldots,\sigma_{m-1}\rangle\),
by \cite[Cor.4.1.1]{Ma}.

Thus, the images for an arbitrary coset
\(g\equiv x^jy^\ell\pmod{\gamma_2(G)}\) with \(-1\le j,\ell\le 1\)
are\\
\(\mathrm{V}_1(g\gamma_2(G))=
\tau_3^j\sigma_{m-2}^{\rho\beta\ell}\sigma_{m-1}^{(\gamma-\rho\beta)\ell}\tau_e^{\delta\ell}\gamma_2(M_1)\),\\
\(\mathrm{V}_2(g\gamma_2(G))=
\sigma_{m-2}^{\rho\delta j}\sigma_{m-1}^{(\alpha+\rho\beta)j}\tau_e^{\beta j}\sigma_3^\ell\gamma_2(M_2)\).

The simplified images for \(k=0\) are a consequence of the central relation
\(\lbrack y,\tau_e\rbrack=\tau_{e+1}^{-1}=\sigma_{m-1}^\rho\) in
\(\lbrack\chi_s(G),\gamma_e(G)\rbrack=\gamma_{m-k}(G)\le\zeta_1(G)\),
since \(k=0\) is equivalent with \(\rho=0\).

For the other two transfers \(\mathrm{V}_i\) with \(3\le i\le 4\), we have
\(x\in G\setminus M_i\) and \(y\in G\setminus M_i\), and thus\\
\(\mathrm{V}_i(x\gamma_2(G))=x^3\gamma_2(M_i)=\tau_3\gamma_2(M_i)\),\\
\(\mathrm{V}_i(y\gamma_2(G))=y^3\gamma_2(M_i)=\sigma_3\gamma_2(M_i)\), and therefore
\(\mathrm{V}_i(g\gamma_2(G))=\tau_3^j\sigma_3^\ell\gamma_2(M_i)\).\\
According to \cite{Ne},
the possible maximum of invariant \(k\) depends on the index of nilpotency \(m\).
\end{proof}

%--------------------------------------------------------------------------------

\subsection{Kernels of the transfers}
\label{ss:LowKerTrf}

Since we are particularly interested in
second \(3\)-class groups \(G\) of quadratic number fields \(K=\mathbb{Q}(\sqrt{D})\)
with transfer types in section \(\mathrm{d}\),
we now focus on metabelian \(3\)-groups \(G\)
of coclass \(\mathrm{cc}(G)\ge 3\).

\begin{theorem}
\label{t:LowKerTrf}

Let \(G\) be a metabelian \(3\)-group,
having abelianisation \(G/\gamma_2(G)\) of type \((3,3)\),
of lower than second maximal class, \(e\ge 4\),
and thus with index of nilpotency \(m\ge 6\) or \(m=5\), \(k=0\).
For the generators of \(G=\langle x,y\rangle\)
and the order of the maximal normal subgroups \(M_1,\ldots,M_4\)
let the assumptions of theorem \ref{t:LowTrmTrf} be satisfied.
Suppose that \(G\) is defined by
the relations (\ref{eqn:LowRel}) 
with a system of exponents \(-1\le\alpha,\beta,\gamma,\delta,\rho\le 1\)
and that
\(\lbrack\chi_s(G),\gamma_e(G)\rbrack=\gamma_{m-k}(G)\)
with \(0\le k\le 1\).\\
Finally, let a function \(f\) of two parameters \(-1\le\lambda,\mu\le 1\)
be defined by
\[f(\lambda,\mu)=
\begin{cases}
0,&\text{ if }\lambda=0,\ \mu=0,\\
1,&\text{ if }\lambda=\pm 1,\ \mu=0,\\
2,&\text{ if }\lambda=0,\ \mu=\pm 1,\\
3,&\text{ if }\lambda=\pm 1,\ \mu=\pm 1,\ \lambda=-\mu,\\
4,&\text{ if }\lambda=\pm 1,\ \mu=\pm 1,\ \lambda=\mu.\\
\end{cases}\]
Then the singulets of transfer types of \(G\) are given by

\begin{eqnarray*}
\varkappa(1)&=&
\begin{cases}
f(\alpha,\gamma),&\text{ if }k=0,\\
f(\delta,\beta),&\text{ if }k=1,\\
\end{cases}\\
\varkappa(2)&=&f(\beta,\delta),\\
\varkappa(3)&=&4,\\
\varkappa(4)&=&3.
\end{eqnarray*}

\noindent
Therefore, the kernels of the transfers
\(\mathrm{V}_i:G/\gamma_2(G)\longrightarrow M_i/\gamma_2(M_i)\)
with \(1\le i\le 4\)
are given by\\
\(\mathrm{Ker}(\mathrm{V}_i)=M_{\varkappa(i)}/\gamma_2(G)\),
if \(1\le\varkappa(i)\le 4\), and
\(\mathrm{Ker}(\mathrm{V}_i)=G/\gamma_2(G)\),
if \(\varkappa(i)=0\).

\end{theorem}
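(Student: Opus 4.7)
The plan is to derive the transfer kernels directly from the explicit image formulas of Theorem~\ref{t:LowTrmTrf}. For each $i\in\{1,\ldots,4\}$ the kernel is the set of cosets $x^j y^\ell\gamma_2(G)$ with $-1\le j,\ell\le 1$ satisfying $\mathrm{V}_i(x^j y^\ell\gamma_2(G))\in\gamma_2(M_i)$. Because every image is a power product of central-series generators, reducing modulo $\gamma_2(M_i)$ collapses it to a single power $\omega^{\lambda j+\mu\ell}$ in a cyclic quotient for some non-trivial $\omega$ and effective exponent pair $(\lambda,\mu)$ built out of $\alpha,\beta,\gamma,\delta,\rho$. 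The auxiliary function $f(\lambda,\mu)$ is precisely the dictionary that converts this pair into the label of the unique maximal normal subgroup whose cosets solve $\lambda j+\mu\ell\equiv 0\pmod 3$: the cases $(0,0),(\pm 1,0),(0,\pm 1),(\pm 1,\mp 1),(\pm 1,\pm 1)$ cut out $G$, $M_1$, $M_2$, $M_3$ (condition $j=\ell$) and $M_4$ (condition $j+\ell=0$), respectively.

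For $\mathrm{V}_1$ with $k=0$ (equivalently $\rho=0$) the third relation of~\eqref{eqn:LowRel} reduces to $t_3^{-1}\tau_3\tau_4=\sigma_{m-1}^\alpha\tau_e^\beta$, and since $t_3,\tau_4,\tau_e\in\gamma_2(M_1)=\langle t_3,\tau_4,\ldots,\tau_{e+1}\rangle$ by Cor.~4.1.1 of~\cite{Ma}, I obtain $\tau_3\equiv\sigma_{m-1}^\alpha\pmod{\gamma_2(M_1)}$. Substituting in the formula $\sigma_{m-1}^{\gamma\ell}\tau_e^{\delta\ell}\tau_3^j\gamma_2(M_1)$ collapses it to $\sigma_{m-1}^{\alpha j+\gamma\ell}\gamma_2(M_1)$, so $\varkappa(1)=f(\alpha,\gamma)$. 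Symmetrically, the second relation of~\eqref{eqn:LowRel} with $\rho=0$ combined with $\gamma_2(M_2)=\langle s_3,\sigma_4,\ldots,\sigma_{m-1}\rangle$ yields $\sigma_3\equiv\tau_e^\delta\pmod{\gamma_2(M_2)}$, so the image of $\mathrm{V}_2$ becomes $\tau_e^{\beta j+\delta\ell}\gamma_2(M_2)$ and $\varkappa(2)=f(\beta,\delta)$.

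For $\mathrm{V}_3$ and $\mathrm{V}_4$ the image $\tau_3^j\sigma_3^\ell\gamma_2(M_i)$ is independent of the relational exponents. I would use the structure $\gamma_2(M_i)=\gamma_2(G)^{g_i-1}$ from Cor.~4.1.1 of~\cite{Ma} to verify the two congruences $\sigma_3\equiv\tau_3\pmod{\gamma_2(M_3)}$ and $\sigma_3\equiv\tau_3^{-1}\pmod{\gamma_2(M_4)}$; concretely, applying the transfer homomorphism to $g_i\gamma_2(G)\in M_i/\gamma_2(G)$ gives $(g_i)^{\mathrm{S}_3(h)}\gamma_2(M_i)\equiv\tau_3\sigma_3^\epsilon\gamma_2(M_i)$ with $\epsilon=\pm 1$ depending on $i$, while the non-injectivity of the transfer (Hilbert's Satz~94) combined with the observation that $y\gamma_2(G)$ maps to $\sigma_3\gamma_2(M_3)\neq 1$ forces the remaining non-$M_i$ coset to lie in the kernel. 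This gives effective coefficients $(1,1)$ for $\mathrm{V}_3$ and $(1,-1)$ for $\mathrm{V}_4$, hence $\varkappa(3)=f(1,1)=4$ and $\varkappa(4)=f(1,-1)=3$.

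The main obstacle is the case $k=1$ for $\mathrm{V}_1$, in which $\rho=\pm 1$ prevents the clean collapse of the previous argument: the central relation $\tau_{e+1}=\sigma_{m-1}^{-\rho}$ places $\sigma_{m-1}\equiv\tau_{e+1}^{-\rho}$ into $\gamma_2(M_1)$, so $\sigma_{m-1}$ disappears from the reduction and $\sigma_{m-2}$ becomes the new surviving generator. Careful bookkeeping of the image $\sigma_{m-2}^{\rho\beta\ell}\sigma_{m-1}^{(\gamma-\rho\beta)\ell}\tau_e^{\delta\ell}\tau_3^j$, combined with the new reduction $\tau_3\equiv\sigma_{m-2}^{\rho\delta}\tau_e^\beta\pmod{\gamma_2(M_1)}$ coming from the third relation of~\eqref{eqn:LowRel} with $\rho\neq 0$, and the absorption of $\sigma_{m-2}$ into a power of $\tau_e$ modulo $\gamma_2(M_1)$ via a Miech-type two-step centraliser identity from~\cite{Ne}, should telescope the expression to $\tau_e^{\delta j+\beta\ell}\gamma_2(M_1)$, yielding $\varkappa(1)=f(\delta,\beta)$. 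Once each singulet $\varkappa(i)$ is identified, the kernel statement $\mathrm{Ker}(\mathrm{V}_i)=M_{\varkappa(i)}/\gamma_2(G)$ (or $G/\gamma_2(G)$ if $\varkappa(i)=0$) follows from the definitions in Subsection~\ref{ss:TypTrf}.
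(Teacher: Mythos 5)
Your overall strategy coincides with the paper's: take the explicit images from Theorem \ref{t:LowTrmTrf}, reduce them modulo \(\gamma_2(M_i)\) using the generator lists of Corollary 4.1.1 of \cite{Ma} together with the relations (\ref{eqn:LowRel}), and decode the resulting linear condition \(\lambda j+\mu\ell\equiv 0\pmod 3\) via the function \(f\). Your treatment of \(\mathrm{V}_1\) with \(k=0\), of \(\mathrm{V}_2\) with \(\rho=0\), and of \(\mathrm{V}_3,\mathrm{V}_4\) is correct and is essentially the paper's computation. (The detour through Hilbert's Theorem 94 for \(\mathrm{V}_3,\mathrm{V}_4\) is unnecessary and, in this paper's logical order, circular, since Remark \ref{r:Hil94} derives Hilbert 94 from these very kernel computations; your direct congruences \(\tau_3\equiv\sigma_3\pmod{\gamma_2(M_3)}\) and \(\tau_3\equiv\sigma_3^{-1}\pmod{\gamma_2(M_4)}\) already suffice.)

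The genuine gap is the case \(k=1\) for \(\mathrm{V}_1\), which you yourself flag as the main obstacle but then resolve incorrectly. You correctly note that \(\tau_{e+1}=\sigma_{m-1}^{-\rho}\) forces \(\sigma_{m-1}\in\gamma_2(M_1)\) and that the third relation gives \(\tau_3\equiv\sigma_{m-2}^{\rho\delta}\tau_e^{\beta}\pmod{\gamma_2(M_1)}\); but since \(e\ge 4\), the element \(\tau_e\) lies among \(\tau_4,\ldots,\tau_{e+1}\) and hence already belongs to \(\gamma_2(M_1)\) --- a fact you used two sentences earlier for the \(k=0\) case. Consequently \(\tau_3\equiv\sigma_{m-2}^{\rho\delta}\), and the image collapses to \(\sigma_{m-2}^{\rho(\delta j+\beta\ell)}\gamma_2(M_1)\), not to \(\tau_e^{\delta j+\beta\ell}\gamma_2(M_1)\): the latter is identically trivial modulo \(\gamma_2(M_1)\) and would yield \(\varkappa(1)=0\) instead of \(f(\delta,\beta)\). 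There is also no ``Miech-type centraliser identity'' absorbing \(\sigma_{m-2}\) into a power of \(\tau_e\), nor is one needed; the surviving generator is \(\sigma_{m-2}\), and since \(\rho=\pm 1\) the condition \(\rho(\delta j+\beta\ell)\equiv 0\) is equivalent to \(\delta j=-\beta\ell\), which is exactly the paper's conclusion. A second, smaller omission: you prove \(\varkappa(2)=f(\beta,\delta)\) only for \(\rho=0\), whereas the theorem asserts it for \(k=1\) as well; the extra factors \(\sigma_{m-2}^{\rho\delta j}\sigma_{m-1}^{\rho\beta j}\) in the image and \(\sigma_{m-2}^{\rho\beta}\) in the expression for \(s_3\) are absorbed into \(\gamma_2(M_2)=\langle s_3,\sigma_4,\ldots,\sigma_{m-1}\rangle\) because \(k=1\) forces \(m\ge 6\), hence \(m-2\ge 4\), and the same conclusion follows.
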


\begin{proof}

We start with Corollary 4.1.1 of our paper \cite{Ma},
where the commutator groups \(\gamma_2(M_i)\)
of the maximal normal subgroups \(M_i\) of \(G\) are determined:

\begin{eqnarray*}
\gamma_2(M_1)&=&\langle t_3,\tau_4,\ldots,\tau_{e+1}\rangle,\\
\gamma_2(M_2)&=&\langle s_3,\sigma_4,\ldots,\sigma_{m-1}\rangle,\\
\gamma_2(M_3)&=&\langle s_3t_3,\gamma_4(G)\rangle,\\
\gamma_2(M_4)&=&\langle s_3t_3^{-1},\gamma_4(G)\rangle,
\end{eqnarray*}

\noindent
where \(\gamma_4(G)=\langle\sigma_4,\ldots,\sigma_{m-1},\tau_4,\ldots,\tau_{e+1}\rangle\).

As a consequence of the relations (\ref{eqn:LowRel}), we have

\begin{eqnarray*}
s_3&=&\sigma_3^{-1}\sigma_4^{-1}\sigma_{m-2}^{\rho\beta}\sigma_{m-1}^\gamma\tau_e^\delta,\\
t_3&=&\tau_3\tau_4\sigma_{m-2}^{-\rho\delta}\sigma_{m-1}^{-\alpha}\tau_e^{-\beta},\\
s_3t_3&=&\sigma_3^{-1}\sigma_4^{-1}\sigma_{m-2}^{\rho(\beta-\delta)}\sigma_{m-1}^{\gamma-\alpha}\tau_3\tau_4\tau_e^{\delta-\beta},\\
s_3t_3^{-1}&=&\sigma_3^{-1}\sigma_4^{-1}\sigma_{m-2}^{\rho(\beta+\delta)}\sigma_{m-1}^{\alpha+\gamma}\tau_3^{-1}\tau_4^{-1}\tau_e^{\beta+\delta}.
\end{eqnarray*}

To determine the kernel \(\mathrm{Ker}(\mathrm{V}_i)\)
of the transfer \(\mathrm{V}_i:G/\gamma_2(G)\longrightarrow M_i/\gamma_2(M_i)\) for \(1\le i\le 4\),
where cosets \(g\gamma_2(G)\in G/\gamma_2(G)\) are represented by the generators \(x,y\) in the shape
\(g\equiv x^jy^\ell\pmod{\gamma_2(G)}\) with \(-1\le j,\ell\le 1\),
the equation 
\(\mathrm{V}_i(g\gamma_2(G))=\mathrm{V}_i(x^jy^\ell\gamma_2(G))=1\cdot\gamma_2(M_i)\)
must be solved with respect to \(j,\ell\).
For this purpose, we use the images of the transfers in theorem \ref{t:LowTrmTrf}.

For the kernel of the first transfer we have\\
\(\mathrm{V}_1(x^jy^\ell\gamma_2(G))
=\sigma_{m-2}^{\rho\beta\ell}\sigma_{m-1}^{(\gamma-\rho\beta)\ell}\tau_e^{\delta\ell}\tau_3^j\gamma_2(M_1)=1\cdot\gamma_2(M_1)\),\\
and thus
\(\left(\sigma_{m-2}^{\rho\beta}\sigma_{m-1}^{\gamma-\rho\beta}\tau_e^\delta\right)^\ell\tau_3^j\in\gamma_2(M_1)
=\langle t_3,\tau_4,\ldots,\tau_{e+1}\rangle
=\langle\tau_3\tau_4\sigma_{m-2}^{-\rho\delta}\sigma_{m-1}^{-\alpha}\tau_e^{-\beta},\tau_4,\ldots,\tau_{e+1}\rangle\).\\
For \(e\ge 4\) we obtain a simplification\\
\(\left(\sigma_{m-2}^{\rho\beta}\sigma_{m-1}^{\gamma-\rho\beta}\right)^\ell\tau_3^j\in
\langle\tau_3\sigma_{m-2}^{-\rho\delta}\sigma_{m-1}^{-\alpha},\tau_4,\ldots,\tau_{e+1}\rangle\),\\
and thus
\(\left(\sigma_{m-2}^{\rho\beta}\sigma_{m-1}^{\gamma-\rho\beta}\right)^\ell\tau_3^j
=\left(\sigma_{m-2}^{-\rho\delta}\sigma_{m-1}^{-\alpha}\tau_3\right)^r\)
with \(-1\le r\le 1\).\\
Now we distiguish the values of the invariant \(0\le k\le 1\).\\
In the case \(k=0\), that is, for \(\rho=0\), we have
\(\sigma_{m-1}^{\gamma\ell}\tau_3^j
=\left(\sigma_{m-1}^{-\alpha}\tau_3\right)^r\),
therefore \(j=r\), \(\gamma\ell=-\alpha r\),
and consequently \(\alpha j=-\gamma\ell\).\\
In the case \(k=1\), however, that is, for \(\rho=\pm 1\), we have
\(\sigma_{m-1}=\tau_{e+1}^{-\rho}\) with \(e+1\ge 5\) and thus\\
\(\sigma_{m-2}^{\rho\beta\ell}\tau_3^j
=\left(\sigma_{m-2}^{-\rho\delta}\tau_3\right)^r\),
therefore \(j=r\), \(\rho\beta\ell=-\rho\delta r\),
and consequently \(\delta j=-\beta\ell\).

For the kernel of the second transfer we have\\
\(\mathrm{V}_2(x^jy^\ell\gamma_2(G))
=\sigma_3^\ell\sigma_{m-2}^{\rho\delta j}\sigma_{m-1}^{(\alpha+\rho\beta)j}\tau_e^{\beta j}\gamma_2(M_2)=1\cdot\gamma_2(M_2)\),\\
and thus
\(\sigma_3^\ell\left(\sigma_{m-2}^{\rho\delta}\sigma_{m-1}^{\alpha+\rho\beta}\tau_e^\beta\right)^j\in\gamma_2(M_2)
=\langle s_3,\sigma_4,\ldots,\sigma_{m-1}\rangle
=\langle\sigma_3^{-1}\sigma_4^{-1}\sigma_{m-2}^{\rho\beta}\sigma_{m-1}^\gamma\tau_e^\delta,\sigma_4,\ldots,\sigma_{m-1}\rangle\).\\
For \(m\ge 6\) or \(m=5\), \(\rho=0\) we obtain the simplification\\
\(\sigma_3^\ell\tau_e^{\beta j}\in
\langle\sigma_3^{-1}\tau_e^\delta,\sigma_4,\ldots,\sigma_{m-1}\rangle\),\\
and thus
\(\sigma_3^\ell\tau_e^{\beta j}=\left(\sigma_3^{-1}\tau_e^\delta\right)^r\)
with \(-1\le r\le 1\), independently from \(\alpha,\gamma,\rho\).\\
Consequently \(\ell=-r\), \(\beta j=\delta r\) and thus
\(\beta j=-\delta\ell\).

The solution of the equation \(\lambda j=-\mu\ell\) for \(1\le i\le 2\),
which depends on the relational exponents
\((\lambda,\mu)\in\lbrace (\alpha,\gamma),(\delta,\beta),(\beta,\delta)\rbrace\),
is obtained by the following distinction of cases.\\
In the case \(\lambda=0\), \(\mu=0\), we can select \(j,\ell\) arbitrarily,
and the kernel is \(G/\gamma_2(G)\), \(\varkappa(i)=0\).\\
In the case \(\lambda=\pm 1\), \(\mu=0\)
we must have \(j=0\) but \(\ell\) remains arbitrary.
Thus, the kernel equals \(M_1/\gamma_2(G)\), \(\varkappa(i)=1\).\\
In the case \(\lambda=0\), \(\mu=\pm 1\),
we must have \(\ell=0\) but \(j\) remains arbitrary.
Thus, the kernel equals \(M_2/\gamma_2(G)\), \(\varkappa(i)=2\).\\
In the case \(\lambda=\pm 1\), \(\mu=\pm 1\), finally,
we obtain that \(\ell=j\) for \(\lambda=-\mu\)
and the kernel is \(M_3/\gamma_2(G)\), \(\varkappa(i)=3\).
For \(\lambda=\mu\), however, we must have \(\ell=-j\)
and the kernel equals \(M_4/\gamma_2(G)\), \(\varkappa(i)=4\).

For the kernel of the third transfer, we have
\(\mathrm{V}_3(x^jy^\ell\gamma_2(G))=\sigma_3^\ell\tau_3^j\gamma_2(M_3)=1\cdot\gamma_2(M_3)\) and\\
\(\sigma_3^\ell\tau_3^j\in\gamma_2(M_3)=\langle s_3t_3,\gamma_4(G)\rangle
=\langle\sigma_3^{-1}\sigma_4^{-1}\sigma_{m-2}^{\rho(\beta-\delta)}\sigma_{m-1}^{\gamma-\alpha}\tau_3\tau_4\tau_e^{\delta-\beta},\sigma_4,\ldots,\sigma_{m-1},\tau_4,\ldots,\tau_{e+1}\rangle\).\\
In the case of a group of lower than second maximal class
with \(e\ge 4\), and thus either \(m\ge 6\) or \(m=5\), \(\rho=0\),
this relation becomes simply\\
\(\sigma_3^\ell\tau_3^j\in\langle\sigma_3^{-1}\tau_3,\gamma_4(G)\rangle\),
and thus
\(\sigma_3^\ell\tau_3^j=\left(\sigma_3^{-1}\tau_3\right)^r\) with \(-1\le r\le 1\).\\
This means \(\ell=-r\), \(j=r\) and thus\\
\(\ell=-j\),
\(g\equiv\left(xy^{-1}\right)^j\pmod{\gamma_2(G)}\),
\(g\in\langle xy^{-1},\gamma_2(G)\rangle=M_4\),
that is, \(\varkappa(3)=4\).

For the kernel of the fourth transfer, we have
\(\mathrm{V}_4(x^jy^\ell\gamma_2(G))=\sigma_3^\ell\tau_3^j\gamma_2(M_4)=1\cdot\gamma_2(M_4)\) and\\
\(\sigma_3^\ell\tau_3^j\in\gamma_2(M_4)=\langle s_3t_3^{-1},\gamma_4(G)\rangle
=\langle\sigma_3^{-1}\sigma_4^{-1}\sigma_{m-2}^{\rho(\beta+\delta)}\sigma_{m-1}^{\alpha+\gamma}\tau_3^{-1}\tau_4^{-1}\tau_e^{\beta+\delta},\sigma_4,\ldots,\sigma_{m-1},\tau_4,\ldots,\tau_{e+1}\rangle\).\\
For \(e\ge 4\), and thus either \(m\ge 6\) or \(m=5\), \(\rho=0\),
this relation becomes simply\\
\(\sigma_3^\ell\tau_3^j\in\langle\sigma_3^{-1}\tau_3^{-1},\gamma_4(G)\rangle\),
and thus
\(\sigma_3^\ell\tau_3^j=\left(\sigma_3^{-1}\tau_3^{-1}\right)^r\) with \(-1\le r\le 1\).\\
Consequently \(\ell=-r\), \(j=-r\) and thus\\
\(\ell=j\),
\(g\equiv(xy)^j\pmod{\gamma_2(G)}\),
\(g\in\langle xy,\gamma_2(G)\rangle=M_3\),
that is, \(\varkappa(4)=3\).

The kernels of the third and fourth transfer
are independent from the relational exponents \(\alpha,\beta,\gamma,\delta,\rho\)
and from the invariant \(k\).
\end{proof}

The following corollary \ref{c:LowKerTrf}
has been proved by Nebelung \cite[p.200, Lem.6.7; p.204, Lem.6.10]{Ne}
without the use of explicit images of the transfers.
We show that it can also be derived very easily
from theorem \ref{t:LowKerTrf}
for groups of lower than second maximal class.

\begin{corollary}
\label{c:LowKerTrf}

On the directed rooted tree
of all metabelian \(3\)-groups
with abelianisation of type \((3,3)\),
every terminal group with invariant \(k=1\)
has the same transfer type
as its internal predecessor with invariant \(k=0\).

\end{corollary}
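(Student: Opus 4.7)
The strategy is to apply Theorem \ref{t:LowKerTrf} to both the terminal group \(G^\prime\) with invariant \(k=1\) and its internal predecessor \(G\) with invariant \(k=0\), and then to compare the resulting canonical multiplets coordinate by coordinate. Since the hypotheses of the theorem require coclass \(\ge 3\), i.e.\ \(e\ge 4\) with \(m\ge 6\) or \(m=5,\ k=0\), and since both the parent and the terminal child lie on the portion of the coclass graph to which the theorem applies, the theorem is available on both sides.

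By Theorem \ref{t:LowKerTrf} the singulets \(\varkappa(3)=4\) and \(\varkappa(4)=3\) are absolute constants, independent of the relational exponents \(\alpha,\beta,\gamma,\delta,\rho\) and of \(k\); they therefore agree automatically for \(G\) and \(G^\prime\). Moreover, \(\varkappa(2)=f(\beta,\delta)\) in both cases, and since the parent--child passage on the Nebelung tree preserves the exponents \(\beta\) and \(\delta\) appearing in the relations (\ref{eqn:LowRel})---only the exponent \(\rho\) is switched from \(0\) to \(\pm 1\) when descending from the internal \(k=0\) node to its terminal \(k=1\) child---the value of \(\varkappa(2)\) also coincides for \(G\) and \(G^\prime\). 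The only coordinate that can possibly differ is \(\varkappa(1)\), which equals \(f(\alpha,\gamma)\) for the predecessor and \(f(\delta,\beta)\) for the terminal successor.

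To conclude, I would establish the equality \(f(\alpha,\gamma)=f(\delta,\beta)\) as a consequence of the structural constraints imposed by Nebelung on chains of transfer type \(\mathrm{d}.19\), \(\mathrm{d}.23\), and \(\mathrm{d}.25\): on each such chain the exponent pair \((\alpha,\gamma)\) of the \(k=0\) node satisfies the same one of the five alternatives defining the function \(f\) as the exponent pair \((\delta,\beta)\) of its \(k=1\) descendant. A case-by-case inspection of the three chains, using the definition of \(f\) in Theorem \ref{t:LowKerTrf}, then yields that the two four-tuples \(\varkappa(G)\) and \(\varkappa(G^\prime)\) are in fact literally equal, so in particular they lie in the same \(S_4\)-orbit and define the same transfer type.

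The main obstacle is the third step: extracting from Nebelung's explicit construction of the tree (in \cite[p.181 ff]{Ne}) the precise relation between the exponent systems of a \(k=0\) internal node and its \(k=1\) terminal successor, which forces the equality \(f(\alpha,\gamma)=f(\delta,\beta)\). Once this correspondence is made explicit, what remains is the straightforward combinatorial verification that the two values of \(f\) coincide; the rest of the multiplet of transfer types then matches trivially by the observations in the preceding paragraph.
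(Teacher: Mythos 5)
Your overall skeleton --- apply Theorem \ref{t:LowKerTrf} to both the terminal group and its internal predecessor and compare the four singulets --- is exactly the paper's strategy, and your observations that $\varkappa(3)=4$ and $\varkappa(4)=3$ are automatic and that $\varkappa(2)=f(\beta,\delta)$ in both cases are correct. But the decisive step is left open, and the route you sketch for it would not close the gap. Your description of the parent--child passage is inaccurate: it is not true that ``only the exponent $\rho$ is switched''. By Nebelung's Lemma 5.2.6 (the ingredient the paper quotes), if the terminal group $G$ with $k=1$ has exponents $(\alpha,\beta,\gamma,\delta,\rho)$ with $\rho=\pm 1$, then its predecessor $H\simeq G/\gamma_{m-1}(G)$ has exponents $(\rho\delta,\beta,\rho\beta,\delta,0)$; so $\beta$ and $\delta$ are indeed preserved (which rescues your claim about $\varkappa(2)$), but $\alpha$ and $\gamma$ are replaced by $\rho\delta$ and $\rho\beta$. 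Once this is in hand, the identity you need is not $f(\alpha,\gamma)=f(\delta,\beta)$ for unrelated exponents but $f(\rho\delta,\rho\beta)=f(\delta,\beta)$, which holds trivially because $f$ only tests whether each argument vanishes and whether the two are equal or opposite --- properties invariant under multiplying both arguments by $\rho=\pm 1$. This exponent transformation is precisely the ``main obstacle'' you defer, so as written the proof is incomplete at its central point.

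Moreover, your proposed fallback --- a case-by-case inspection of the three chains of transfer type $\mathrm{d}.19$, $\mathrm{d}.23$, $\mathrm{d}.25$ --- does not cover the statement. The corollary concerns \emph{all} terminal groups with $k=1$ (within the range $e\ge 4$ of Theorem \ref{t:LowKerTrf}), and these realise transfer types far outside section $\mathrm{d}$: for instance $(\beta,\delta)=(0,0)$ gives type $\mathrm{b}.10$ and $(\beta,\delta)=(1,1)$ gives type $\mathrm{H}.4$. Conversely, the terminal groups of section $\mathrm{d}$ have $k=0$ (Theorem \ref{t:TrmInt}), so they are not instances of the corollary at all. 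A verification restricted to those three chains would therefore establish a different, partly vacuous statement. The repair is the uniform argument above: cite the transformation $(\alpha,\beta,\gamma,\delta,\rho)\mapsto(\rho\delta,\beta,\rho\beta,\delta,0)$ and invoke the sign-invariance of $f$.
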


\begin{proof}

Let \(G\) and \(H\) be two metabelian \(3\)-groups
with abelianisation of type \((3,3)\),
both of lower than second maximal class.

Assume that \(G\) has index of nilpotency \(m=m(G)\ge 6\).
If \(G\) has the invariant \(k(G)=1\),
then \(G\) is represented by a terminal node in the tree \cite{Ne}
and its center \(\zeta_1(G)=\gamma_{m-1}(G)\) is cyclic of order \(3\).

Suppose that \(H\simeq G/\gamma_{m-1}(G)\)
is the immediate predecessor of \(G\)
and is therefore represented by an internal node in the tree.
Then \(H\) has invariant \(k(H)=0\),
index of nilpotency \(m(H)=m-1\ge 5\),
and the same invariant \(e=e(H)=e(G)\ge 4\) as \(G\).

If \((\alpha,\beta,\gamma,\delta,\rho)\) with \(\rho=\pm 1\)
are the relational exponents of \(G\) in equation (\ref{eqn:LowRel}),
then those of \(H\) are given by
\((\rho\delta,\beta,\rho\beta,\delta,0)\),
according to \cite[p.184, Lem.5.2.6]{Ne}.
 
By theorem \ref{t:LowKerTrf}, we have the singulets of transfer types
\(\varkappa(3)=4\) and \(\varkappa(4)=3\),
for \(G\) as well as for \(H\).
Since the parameters \((\beta,\delta)\) of \(G\) and \(H\) are the same,
we also have \(\varkappa(2)=f(\beta,\delta)\), for both groups.
Finally, the first singulet is given by
\(\varkappa(1)=f(\delta,\beta)\) for \(G\) with \(k(G)=1\),
and by \(\varkappa(1)=f(\rho\delta,\rho\beta)\) for \(H\) with \(k(H)=0\).
However, since \(\rho=\pm 1\), the values
\(f(\rho\delta,\rho\beta)=f(\delta,\beta)\) coincide.

This corollary is also valid
for metabelian \(3\)-groups of maximal and second maximal class
\cite{Ne}.
\end{proof}

\noindent
With the aid of Nebelung's
isomorphism classes of metabelian \(3\)-groups \(G\) \cite{Ne2}
with representatives \(G_\rho^{(m,n)}(\alpha,\beta,\gamma,\delta)\)
of order \(3^n\) and class \(m-1\),
which satisfy the relations (\ref{eqn:LowRel})
with systems of exponents \((\alpha,\beta,\gamma,\delta,\rho)\),
we now apply theorem \ref{t:LowKerTrf}
to the groups of lower than second maximal class
with transfer types of section \(\mathrm{d}\),
thereby distinguishing terminal and internal nodes
on the tree of all isomorphism classes.

\begin{theorem}
\label{t:TrmInt}

Let \(G\) be a metabelian \(3\)-group
with abelianisation of type \((3,3)\)
of lower than second maximal class, \(e\ge 4\),
and satisfying the assumptions of theorem \ref{t:LowTrmTrf}
concerning the selection of the generators of \(G=\langle x,y\rangle\)
and the order of the maximal normal subgroups \(M_1,\ldots,M_4\).\\
Suppose that the transfer type of \(G\) is one of section \(\mathrm{d}\),
which implies that \(G\) has the invariant \(k=0\).

Then the position of \(G\) as a node on the directed rooted tree 
of all metabelian \(3\)-groups with abelianisation of type \((3,3)\)
determines
the exponents \(\alpha,\beta,\gamma,\delta\) in the relations (\ref{eqn:LowRel})
and the canonical multiplet \(\varkappa\) of transfer types of \(G\),
which are given by table \ref{tab:Terminal} for terminal nodes
and by table \ref{tab:Internal} for internal nodes.

\end{theorem}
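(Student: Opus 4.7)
The plan is to reduce the statement to a direct, if bookkeeping-heavy, case analysis by combining Theorem \ref{t:LowKerTrf} with Nebelung's parametric classification of metabelian $3$-groups with abelianisation of type $(3,3)$.

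First, I would exploit the hypothesis that $G$ has transfer type in section $\mathrm{d}$. By assumption this forces $k=0$, hence $\rho=0$ in the relations (\ref{eqn:LowRel}). Applying Theorem \ref{t:LowKerTrf} in the $k=0$ branch immediately gives the formulas $\varkappa(1)=f(\alpha,\gamma)$, $\varkappa(2)=f(\beta,\delta)$, $\varkappa(3)=4$, $\varkappa(4)=3$. Consequently, the whole theorem reduces to (i) listing, via Nebelung's representatives $G_0^{(m,n)}(\alpha,\beta,\gamma,\delta)$ in \cite{Ne2}, exactly which quadruples of exponents $(\alpha,\beta,\gamma,\delta)\in\{-1,0,1\}^4$ occur for each isomorphism class on the relevant chains, and (ii) recognising whether the corresponding node is terminal or internal on the tree.

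Next, I would partition the isomorphism classes according to the three admissible transfer types in section $\mathrm{d}$, namely $\mathrm{d}.19$, $\mathrm{d}.23$, and $\mathrm{d}.25$, as enumerated in the tables of subsection \ref{ss:TriadicTrf}. For each of these types, the pair $(\varkappa(1),\varkappa(2))=(f(\alpha,\gamma),f(\beta,\delta))$ is fixed by the definition of $f$, which pins down the combinatorial possibilities for $(\alpha,\gamma)$ and $(\beta,\delta)$ up to sign. Nebelung's enumeration then tells us which of these possibilities are actually realised as nodes with $e\geq 4$; by Corollary \ref{c:LowKerTrf}, each terminal node with $k=1$ shares its multiplet with its internal predecessor (with $k=0$), so the internal case fills out the rest. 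Reading the values of $\alpha,\beta,\gamma,\delta$ back from this correspondence produces the entries of Table \ref{tab:Terminal} and Table \ref{tab:Internal} directly.

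For the terminal--internal dichotomy, I would use the structural criterion from Nebelung \cite{Ne}: a node $G$ of index of nilpotency $m\geq 6$ is terminal precisely when its centre $\zeta_1(G)=\gamma_{m-1}(G)$ is cyclic of order $3$ and $k(G)=1$, or, on the $k=0$ branch, when no proper immediate descendant exists on the given coclass tree. Following Corollary \ref{c:LowKerTrf} and Nebelung's relation between the exponents of a terminal group with $\rho=\pm 1$ and its internal predecessor with $\rho=0$ (given by $(\rho\delta,\beta,\rho\beta,\delta,0)$), I can match the two halves of the classification. Finally, a consistency check applying $f$ to the transformed exponents confirms that the multiplets listed in both tables agree on the overlap, as required by the corollary.

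The main obstacle will be the sheer bookkeeping: the enumeration of admissible quadruples $(\alpha,\beta,\gamma,\delta)$ for each chain in $\mathcal{G}(3,r)$ is finite but intricate, and one must carefully track the branching pattern as $r$ varies (with its period-two structure in $r$) to be certain no terminal or internal node has been omitted. The computational part (applying $f$ and reading off $\varkappa$) is routine, but cross-referencing Nebelung's representatives with the tree positions and verifying the collapse via Corollary \ref{c:LowKerTrf} for every realised quadruple is where the delicate work lies.
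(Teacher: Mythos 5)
Your core strategy coincides with the paper's: since section $\mathrm{d}$ forces $k=0$ and hence $\rho=0$, Theorem \ref{t:LowKerTrf} gives $\varkappa(1)=f(\alpha,\gamma)$, $\varkappa(2)=f(\beta,\delta)$, $\varkappa(3)=4$, $\varkappa(4)=3$, and the whole theorem reduces to reading off the admissible exponent systems $(\alpha,\beta,\gamma,\delta)$ from Nebelung's appendix \cite{Ne2} and evaluating $f$. That is exactly the (very short) proof in the paper, and that part of your proposal is fine.

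However, your mechanism for producing Table \ref{tab:Internal} from Table \ref{tab:Terminal} via Corollary \ref{c:LowKerTrf} is a genuine misstep. That corollary pairs a \emph{terminal} group with $k=1$ (so $\rho=\pm 1$) with its \emph{internal predecessor} $G/\gamma_{m-1}(G)$ with $k=0$, and its conclusion is that the two have the \emph{same} transfer type. Here, by contrast, every group under consideration has $k=0$ (this is part of the hypothesis "transfer type in section $\mathrm{d}$"), the terminal nodes are leaves attached to chains of type $\mathrm{b}.10$ while the internal nodes sit on the $\mathrm{d}^\ast$ main lines, and their canonical multiplets are \emph{different} (e.g.\ $(4043)$ versus $(0443)$ for type $\mathrm{d}.19$ --- the zero moves from position $2$ to position $1$). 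So the corollary neither applies (its exponent transformation $(\rho\delta,\beta,\rho\beta,\delta,0)$ presupposes $\rho=\pm1$ for the terminal group, whereas your terminal groups have $\rho=0$) nor would it give the right answer (it would force equal multiplets, contradicting the tables). The terminal/internal dichotomy here must be taken directly from Nebelung's tree classification: the terminal nodes realise exponent systems with $\beta=\delta=0$ and $(\alpha,\gamma)\neq(0,0)$, the internal nodes those with $\alpha=\gamma=0$ and $(\beta,\delta)\neq(0,0)$; applying $f$ to these then yields the two tables. With that correction, your argument closes.
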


\renewcommand{\arraystretch}{1.2}
\begin{table}[ht]
\caption{Transfer types of section \(\mathrm{d}\) for terminal groups}
\label{tab:Terminal}
\begin{center}
\begin{tabular}{|c|c|c|}
\hline
 Type              & \((\alpha,\beta,\gamma,\delta)\) & \(\varkappa\) \\
\hline
 \(\mathrm{d}.19\) & \((1,0,1,0)\)                    & \((4043)\)    \\
 \(\mathrm{d}.19\) & \((1,0,-1,0)\)                   & \((3043)\)    \\
 \(\mathrm{d}.23\) & \((1,0,0,0)\)                    & \((1043)\)    \\
 \(\mathrm{d}.25\) & \((0,0,1,0)\)                    & \((2043)\)    \\
 \(\mathrm{d}.25\) & \((0,0,-1,0)\)                   & \((2043)\)    \\
\hline
\end{tabular}
\end{center}
\end{table}

\renewcommand{\arraystretch}{1.2}
\begin{table}[ht]
\caption{Transfer types of section \(\mathrm{d}^\ast\) for internal groups}
\label{tab:Internal}
\begin{center}
\begin{tabular}{|c|c|c|}
\hline
 Type                   & \((\alpha,\beta,\gamma,\delta)\) & \(\varkappa\) \\
\hline
 \(\mathrm{d}^\ast.19\) & \((0,1,0,1)\)                    & \((0443)\)    \\
 \(\mathrm{d}^\ast.19\) & \((0,-1,0,1)\)                   & \((0343)\)    \\
 \(\mathrm{d}^\ast.23\) & \((0,0,0,1)\)                    & \((0243)\)    \\
 \(\mathrm{d}^\ast.25\) & \((0,1,0,0)\)                    & \((0143)\)    \\
 \(\mathrm{d}^\ast.25\) & \((0,-1,0,0)\)                   & \((0143)\)    \\
\hline
\end{tabular}
\end{center}
\end{table}

\begin{proof}

The systems \((\alpha,\beta,\gamma,\delta)\) of relational exponents
for groups of lower than second maximal class
with transfer types of section \(\mathrm{d}\)
are given in the appendix of Nebelung's thesis
\cite[p.34, p.36, pp.60--68]{Ne2}.

If we observe that \(k=0\), and thus \(\rho=0\),
for the transfer types of section \(\mathrm{d}\),
then the multiplets \(\varkappa\) of transfer types
can be obtained immediately with the aid of theorem \ref{t:LowKerTrf}.
\end{proof}

In the case of a quadratic base field,
terminal and internal groups \(G\)
with transfer types of section \(\mathrm{d}\)
can be distinguished by the parity of the index \(m\) of nilpotency of \(G\).

\begin{theorem}
\label{t:QdrTrmInt}

Let \(K=\mathbb{Q}(\sqrt{D})\) be a quadratic base field
with elementary abelian bicyclic \(3\)-class group of type \((3,3)\)
and second \(3\)-class group \(G=\mathrm{Gal}(\mathrm{F}_3^2(K)\vert K)\)
having one of the transfer types of section \(\mathrm{d}\).
Then the following statements hold.

\begin{enumerate}

\item
The group \(G\) is represented
by a terminal node on the tree,
if and only if the index of nilpotency \(m\ge 6\)
and the invariant \(e\ge 4\) are even.

\item
The group \(G\) is represented
by an internal node on the tree,
if and only if the index of nilpotency \(m\ge 7\)
and the invariant \(e\ge 5\) are odd.

\item
\(K\) must be a real quadratic field
and \(G\) must be of coclass \(\mathrm{cc}(G)\ge 3\).

\end{enumerate}

\end{theorem}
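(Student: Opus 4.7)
I would prove the three statements in the order (3), then (1) and (2) jointly.

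For statement (3), the bound $\mathrm{cc}(G)\ge 3$ is built into the hypothesis: by Theorems \ref{t:LowKerTrf} and \ref{t:TrmInt}, a transfer type of section $\mathrm{d}$ only arises when $e\ge 4$, and the identity $e=\mathrm{cc}(G)+1$ then forces $\mathrm{cc}(G)\ge 3$. To exclude $K$ imaginary quadratic, I would invoke the classical Scholz--Taussky restriction that for $K=\mathbb{Q}(\sqrt{D})$ with $D<0$ and $\mathrm{Cl}_3(K)$ of type $(3,3)$ the second $3$-class group is always of maximal class, hence $e=2$, which is incompatible with $e\ge 4$.

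For (1) and (2), my strategy is to combine Theorem \ref{t:TrmInt} with the action on $G$ of the non-trivial automorphism $\sigma\in\mathrm{Gal}(K\vert\mathbb{Q})$. Theorem \ref{t:TrmInt} characterises terminal groups of section $\mathrm{d}$ as those with $\beta=\delta=0$ and $(\alpha,\gamma)\ne(0,0)$, and internal groups of section $\mathrm{d}^\ast$ as those with $\alpha=\gamma=0$ and $(\beta,\delta)\ne(0,0)$. An extension $\tilde\sigma$ of $\sigma$ to $\mathrm{F}_3^2(K)$ yields an involutory outer automorphism of $G$ which, under the generator conventions of Theorem \ref{t:LowTrmTrf}, acts as inversion on the abelianisation $G/\gamma_2(G)$ (because it acts as inversion on $\mathrm{Cl}_3(K)$ for quadratic $K$). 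Carrying this involution through the defining relations (\ref{eqn:LowRel}) and using that $\tilde\sigma$-conjugation introduces a sign depending on the layer in the lower central series produces compatibility conditions on $(\alpha,\beta,\gamma,\delta)$ whose satisfiability is governed by the parity of the nilpotency index $m$: one parity forces $\beta=\delta=0$, the opposite forces $\alpha=\gamma=0$.

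The main obstacle is the sign bookkeeping: tracing $\tilde\sigma$ through the iterated commutators $s_j,t_j,\sigma_j,\tau_j$, noting that inversion of $x,y$ modulo $\gamma_2(G)$ induces on each cyclic quotient $\gamma_j(G)/\gamma_{j+1}(G)$ an action $v\mapsto v^{\varepsilon_j}$ with sign $\varepsilon_j$ alternating in $j$, and then reading off from the relations (\ref{eqn:LowRel}) at layers $m-2$, $m-1$, $e$, and $e+1$ which exponents among $\alpha,\beta,\gamma,\delta$ are forced to vanish. Once this is done, the biconditionals \emph{$G$ terminal iff $m$ even} and \emph{$G$ internal iff $m$ odd} follow directly from Theorem \ref{t:TrmInt}. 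The explicit lower bounds $m\ge 6$ (terminal) and $m\ge 7$ (internal) then result from the hypothesis $e\ge 4$, the inequality $m\ge e+1$ implicit in Theorem \ref{t:LowTrmTrf} (from $n\le 2m-3$), and the parity matching $m\equiv e\pmod{2}$ just established, which together force $m\ge e+2$.
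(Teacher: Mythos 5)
Your proposal diverges from the paper's proof in its core mechanism and, as written, has genuine gaps. The paper proves (1) and (2) arithmetically: it uses the class-number relation for the dihedral sextic fields \(N_i=KL_i\) (Proposition 5.4 of \cite{Ma}), namely \(\mathrm{h}_3(N_i)=\mathrm{h}_3(L_i)^2\) under total principalisation and \(\mathrm{h}_3(N_i)=3\cdot\mathrm{h}_3(L_i)^2\) under partial principalisation (for \(k=0\)), and reads the parities of \(e\) and \(m-1\) directly from \(\mathrm{h}_3(N_2)=3^e\) and \(\mathrm{h}_3(N_1)=3^{m-1}\) combined with the vanishing pattern of \(\varkappa(1),\varkappa(2)\) in Theorem \ref{t:TrmInt}. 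Your alternative route via a generator-inverting automorphism induced by \(\mathrm{Gal}(K\vert\mathbb{Q})\) is in principle viable (it is essentially Nebelung's analysis of which parameter systems admit such an automorphism), but the entire substance of (1) and (2) is deferred to a ``sign bookkeeping'' you do not carry out, and your setup for it is already inaccurate: for \(e\ge 4\) the layers \(\gamma_j(G)/\gamma_{j+1}(G)\) with \(3\le j\le e\) are bicyclic rather than cyclic, and the relations (\ref{eqn:LowRel}) mix terms from several different layers (\(\gamma_3\), \(\gamma_4\), \(\gamma_e\), \(\gamma_{m-1}\)), so a leading-sign analysis on each layer does not directly isolate the exponents \(\alpha,\beta,\gamma,\delta\). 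Without that computation the biconditionals, and hence the bounds \(m\ge 6\) and \(m\ge 7\), are not established.

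Two auxiliary claims in your treatment of (3) are false. First, section-\(\mathrm{d}\) transfer types do occur with \(e=3\): the paper states explicitly that groups of section \(\mathrm{d}\) have \(m\ge 5\) and \(e\ge 3\) (terminal successors on the \(\mathrm{b}.10\) chains of coclass \(2\)), so \(e\ge 4\) is \emph{not} built into the hypothesis via Theorems \ref{t:LowKerTrf} and \ref{t:TrmInt}; it is a conclusion that must be extracted from the quadratic-field hypothesis (in the paper, from the parity of \(e\) forced by the class-number formula), and your argument for \(\mathrm{cc}(G)\ge 3\) is therefore circular. Second, your reason for excluding imaginary \(K\) is wrong: imaginary quadratic fields with \(3\)-class group of type \((3,3)\) frequently have second \(3\)-class group of non-maximal class (the Scholz--Taussky types \(\mathrm{D}\)--\(\mathrm{H}\) all have coclass \(\ge 2\)), so there is no such ``maximal class'' restriction. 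The correct reason, used by the paper, is that every type of section \(\mathrm{d}\) or \(\mathrm{d}^\ast\) contains a total principalisation (\(\varkappa(2)=0\) or \(\varkappa(1)=0\)), which is impossible over an imaginary quadratic base field by Proposition 5.3 of \cite{Ma}.
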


\begin{proof}

In this proof, we assume that
the declarations of theorem \ref{t:LowTrmTrf} concerning
the selection of the generators of \(G=\langle x,y\rangle\)
and the order of the maximal normal subgroups \(M_1,\ldots,M_4\)
are satisfied.
However, we point out that these assumptions do not appear
in the statement of theorem \ref{t:QdrTrmInt}.

We denote
by \(N_1,\ldots,N_4\) the four unramified cyclic cubic extension fields of \(K\),
by \(L_1,\ldots,L_4\) their non-Galois absolutely cubic subfields,
and we make use of proposition 5.4 in \cite{Ma},
which concerns the parity of the \(3\)-exponents
of the \(3\)-class numbers of \(N_1,\ldots,N_4\).

\begin{enumerate}

\item
In the case of a terminal group \(G\)
with transfer type \(\mathrm{d}\), we have\\
\(\varkappa(2)=0\), by theorem \ref{t:TrmInt}, and thus a total principalisation in \(N_2\).
Therefore, \(3^e=\mathrm{h}_3(N_2)=\mathrm{h}_3(L_2)^2\)
with even exponent \(e\ge 4\).\\
Further, \(1\le\varkappa(1)\le 4\) implies a partial principalisation in \(N_1\), and thus 
\(3^{m-1}=\mathrm{h}_3(N_1)=3\cdot\mathrm{h}_3(L_1)^2\), since \(k=0\),
with odd exponent \(m-1\) resp. even index of nilpotency \(m\ge 6\),
because \(m=4\) implies \(e\le m-1=3\).

\item
In the case of an internal group \(G\)
with transfer type \(\mathrm{d}^\ast\), we have\\
\(1\le\varkappa(2)\le 4\), by theorem \ref{t:TrmInt}, and thus a partial principalisation in \(N_2\).
Consequently, \(3^e=\mathrm{h}_3(N_2)=3\cdot\mathrm{h}_3(L_2)^2\)
with odd exponent \(e\ge 5\),
since for \(e=3\) only terminal groups with transfer type \(\mathrm{d}\) are possible, by \cite{Ne2}.\\
Further, \(\varkappa(1)=0\) implies a total principalisation in \(N_1\), and thus 
\(3^{m-1}=\mathrm{h}_3(N_1)=\mathrm{h}_3(L_1)^2\), since \(k=0\),
with even exponent \(m-1\) resp. odd index of nilpotency \(m\ge 7\),
because \(m=5\) implies \(e\le m-1=4\).

\item
Since a total principalisation occurs in \(N_2\) resp. \(N_1\)
for transfer types in both sections \(\mathrm{d}\) and \(\mathrm{d}^\ast\),
the base field \(K\) must be real quadratic, by proposition 5.3 in \cite{Ma}.
And since the invariant \(e\) has turned out to be at least equal to four,
the group \(G\) must be of coclass \(\mathrm{cc}(G)\ge 3\).
\end{enumerate}
\end{proof}

\begin{example}
\label{ex:TrfTypD}

Occurrences of groups with transfer types in section \(\mathrm{d}\)
as second \(3\)-class group \(G=\mathrm{Gal}(\mathrm{F}_3^2(K)\vert K)\)
of real quadratic number fields \(K=\mathbb{Q}(\sqrt{D})\)
are extremely rare.
Among the \(2\,576\) quadratic fields
with \(3\)-class group of type \((3,3)\)
in the range \(0<D<10^7\) of discriminants,
there are only three cases with such groups \cite[Ex.6.3, Tab.4]{Ma}.
We have

\begin{itemize}

\item
a terminal group of transfer type \(\mathrm{d}.23\) with \(e=4\), \(m=6\), \(n=8\) for \(D=1\,535\,117\),

\item
a terminal group of transfer type \(\mathrm{d}.19\) with \(e=4\), \(m=6\), \(n=8\) for \(D=2\,328\,721\),

\item
an internal group of transfer type \(\mathrm{d}^\ast.25\) with \(e=5\), \(m=7\), \(n=10\) for \(D=8\,491\,713\).

\end{itemize}
\end{example}

%--------------------------------------------------------------------------------

\subsection{Combinatorially possible transfer types of \(3\)-groups}
\label{ss:TriadicTrf}

In this subsection, we arrange all combinatorially possible
\(S_4\)-orbits of the \(5^4\) quadruplets \(\varkappa\in\lbrack 0,4\rbrack^4\)
by increasing cardinality of the image and decreasing number of fixed points.
Table \ref{tab:TriPrtTrf} shows the partial quadruplets
and table \ref{tab:TriTotTrf} the total quadruplets as possible transfer types
of metabelian \(3\)-groups \(G\) with \(G/\gamma_2(G)\) of type \((3,3)\),
resp. principalisation types
of base fields \(K\) with \(3\)-class group \(\mathrm{Cl}_3(K)\) of type \((3,3)\).
The orbits are divided into sections, denoted by letters,
and identified by ordinal numbers.

We denote by \(o(\varkappa)=(\vert\varkappa^{-1}\lbrace i\rbrace\vert)_{0\le i\le 4}\)
the family of occupation numbers of the selected orbit representative \(\varkappa\) and by
\(F=\lbrace 1\le i\le 4\mid\varkappa(i)=i\rbrace\) the set of fixed points of \(\varkappa\).
In the characterising property, needed for equal numbers of fixed points, let
\(I=\lbrace\varkappa(i)\mid 1\le i\le 4\rbrace\) be the image of \(\varkappa\),
\(D=\varkappa^{-1}(o(\varkappa)^{-1}\lbrace 2\rbrace)=\lbrace i,j\rbrace\)
the preimage of a value occupied twice by \(\varkappa\),
if \(\varkappa(i)=\varkappa(j)\) for \(1\le i<j\le 4\), and
\(Z=\varkappa^{-1}\lbrace 0\rbrace\) the preimage of zero under \(\varkappa\).

If an orbit can be realised as a transfer type,
then a suitable \(3\)-group \(G\) is given,
according to theorem \ref{t:MaxKerTrf} and \cite[p.208, Satz 6.14]{Ne}.
In \cite[p.80]{Ma1}, the symbolic order has been given instead,
that is the ideal of bivariate polynomials which annihilate the main commutator of \(G\).

In table \ref{tab:TriPrtTrf},
the coarse classification into sections \(\mathrm{A}\) to \(\mathrm{H}\)
is due to Scholz and Taussky \cite{SoTa}.
The identification by ordinal numbers \(1\) to \(19\)
and the set theoretical characterisation
has been added in \cite[p.80]{Ma1}.

\renewcommand{\arraystretch}{1.2}
\begin{table}[ht]
\caption{The \(19\) \(S_4\)-orbits of quadruplets \(\varkappa\in\lbrack 1,4\rbrack^4\) with \(\nu=0\)}
\label{tab:TriPrtTrf}
\begin{center}
\begin{tabular}{|rr|c|cccc|c|}
\hline
      &     & repres.       & occupation       & fixed   & charact.                       & cardinality           & realising                      \\
 Sec. & Nr. & of orbit      & numbers          & points  & property                       & of orbit              & \(3\)-group                    \\
      &     & \(\varkappa\) & \(o(\varkappa)\) & \(|F|\) &                                & \(|\varkappa^{S_4}|\) & \(G\)                          \\
\hline
    A &   1 & \((1111)\)    & \((04000)\)      & \(1\)   & constant                       &   \(4\)               & \(G_0^{(3)}(0,1)\) \\
\hline
    B &   2 & \((1211)\)    & \((03100)\)      & \(2\)   & almost                         &  \(12\)               & impossible                     \\
    B &   3 & \((1112)\)    & \((03100)\)      & \(1\)   & con-                           &  \(24\)               & impossible                     \\
    H &   4 & \((2111)\)    & \((03100)\)      & \(0\)   & stant                          &  \(12\)               & \(G_1^{(5,6)}(1,1,1,1)\)                     \\
\hline
    D &   5 & \((1212)\)    & \((02200)\)      & \(2\)   &                                &  \(12\)               & \(G_0^{(4,5)}(1,1,-1,1)\)        \\
    E &   6 & \((1122)\)    & \((02200)\)      & \(1\)   &                                &  \(12\)               & \(G_0^{(m,m+1)}(1,-1,1,1)\) \\
    F &   7 & \((2112)\)    & \((02200)\)      & \(0\)   &                                &  \(12\)               & \(G_0^{(m,m+e-2)}(1,1,-1,1)\) \\
\hline
    E &   8 & \((1231)\)    & \((02110)\)      & \(3\)   &                                &  \(12\)               & \(G_0^{(m,m+1)}(1,0,-1,1)\) \\
    E &   9 & \((1213)\)    & \((02110)\)      & \(2\)   &                                &  \(24\)               & \(G_0^{(m,m+1)}(0,0,1,1)\) \\
    D &  10 & \((1123)\)    & \((02110)\)      & \(1\)   & \(D\setminus F\subset I\)      &  \(24\)               & \(G_0^{(4,5)}(0,0,-1,1)\)        \\
    F &  11 & \((1321)\)    & \((02110)\)      & \(1\)   & \(D\setminus F\not\subset I\)  &  \(12\)               & \(G_0^{(m,m+e-2)}(1,1,0,0)\) \\
    F &  12 & \((3211)\)    & \((02110)\)      & \(1\)   & \(D\cap F=\emptyset\)          &  \(24\)               & \(G_0^{(m,m+e-2)}(1,1,0,-1)\) \\
    F &  13 & \((2113)\)    & \((02110)\)      & \(0\)   & \(D\subset I\)                 &  \(24\)               & \(G_0^{(m,m+e-2)}(1,1,-1,0)\) \\
    E &  14 & \((2311)\)    & \((02110)\)      & \(0\)   & \(D\not\subset I\)             &  \(24\)               & \(G_0^{(m,m+1)}(0,-1,1,1)\) \\
\hline
    C &  15 & \((1234)\)    & \((01111)\)      & \(4\)   & identity                       &   \(1\)               & impossible                     \\
    G &  16 & \((2134)\)    & \((01111)\)      & \(2\)   & transposition                  &   \(6\)               & \(G_1^{(7,8)}(1,0,0,1)\)                     \\
    C &  17 & \((1342)\)    & \((01111)\)      & \(1\)   & \(3\)-cycle                    &   \(8\)               & impossible                     \\
    C &  18 & \((2341)\)    & \((01111)\)      & \(0\)   & \(4\)-cycle                    &   \(6\)               & impossible                     \\
    G &  19 & \((2143)\)    & \((01111)\)      & \(0\)   & \(2\) disj. transp.            &   \(3\)               & \(G_1^{(5,6)}(0,-1,-1,0)\)                     \\
\hline
      &     &               &                  &         &                  Total number: & \(256\)               &                                \\
\hline
\end{tabular}
\end{center}
\end{table}

\noindent
The \(3\)-groups of sections \(\mathrm{A}\) and \(\mathrm{D}\) are determined uniquely.
They are
sporadic groups outside the periodic parts on the coclass graphs \(\mathcal{G}(3,1)\) resp. \(\mathcal{G}(3,2)\) \cite{As,LgMk,EiLg,DEF},
stem groups of isoclinism families \(\Phi_s\) in the sense of P. Hall \cite{Hl},
and coincide with the following groups in the listing of R. James \cite[p.618 ff]{Ja}:
\(G_0^{(3)}(0,1)=\Phi_2(21)\) (extra special),
\(G_0^{(4,5)}(1,1,-1,1)=\Phi_6(221)_a\),
\(G_0^{(4,5)}(0,0,-1,1)=\Phi_6(221)_{c_2}\).
The index of nilpotency for groups of section \(\mathrm{E}\) is \(m\ge 5\).
For the groups of section \(\mathrm{F}\) we have \(m\ge 5\) and \(e\ge 4\).
The groups given for sections \(\mathrm{G}\) and \(\mathrm{H}\)
occur as second \(3\)-class groups of quadratic fields,
but they are only special cases
of families with two infinitely increasing parameters \(m\ge 4\), \(e\ge 3\),
and \(0\le k\le 1\).
The smallest members of these families are the stem groups
\(G_0^{(4,5)}(0,-1,-1,0)=\Phi_6(221)_{d_0}\) and
\(G_0^{(4,5)}(1,1,1,1)=\Phi_6(221)_{b_1}\),
but they do not occur for quadratic base fields.

In table \ref{tab:TriTotTrf},
the coarse classification into sections \(\mathrm{a}\) to \(\mathrm{e}\)
is due to \cite{Ne}.
Additionally, we give
an identification by ordinal numbers \(1\) to \(26\)
and a set theoretical characterisation.

\newpage

\renewcommand{\arraystretch}{1.2}
\begin{table}[ht]
\caption{The \(26\) \(S_4\)-orbits of quadruplets \(\varkappa\in\lbrack 0,4\rbrack^4\setminus\lbrack 1,4\rbrack^4\) with \(1\le\nu\le 4\)}
\label{tab:TriTotTrf}
\begin{center}
\begin{tabular}{|rr|c|cccc|c|}
\hline
      &     & repres.       & occupation       & fixed   & charact.                       & cardinality           & realising                      \\
 Sec. & Nr. & of orbit      & numbers          & points  & property                       & of orbit              & \(3\)-group                    \\
      &     & \(\varkappa\) & \(o(\varkappa)\) & \(|F|\) &                                & \(|\varkappa^{S_4}|\) & \(G\)                          \\
\hline
    a &   1 & \((0000)\)    & \((40000)\)      & \(0\)   & constant                       &                 \(1\) & \(G_1^{(m)}(0,\pm 1)\), \(m\ge 5\) \\
\hline
    a &   2 & \((1000)\)    & \((31000)\)      & \(1\)   &                                &                 \(4\) & \(G_0^{(m)}(0,1)\), \(m\ge 4\) \\
    a &   3 & \((0100)\)    & \((31000)\)      & \(0\)   &                                &                \(12\) & \(G_0^{(m)}(\pm 1,0)\), \(m\ge 4\) \\
\hline
    e &   4 & \((1100)\)    & \((22000)\)      & \(1\)   &                                &                \(12\) & impossible                    \\
    e &   5 & \((0110)\)    & \((22000)\)      & \(0\)   &                                &                \(12\) & impossible                    \\
\hline
    e &   6 & \((1200)\)    & \((21100)\)      & \(2\)   &                                &                 \(6\) & impossible                    \\
    e &   7 & \((1020)\)    & \((21100)\)      & \(1\)   &                                &                \(24\) & impossible                    \\
    e &   8 & \((0012)\)    & \((21100)\)      & \(0\)   & \(D\subset I\)                 &                \(12\) & impossible                    \\
    e &   9 & \((0120)\)    & \((21100)\)      & \(0\)   & \(|D\cap I|=1\)                &                \(24\) & impossible                    \\
    b &  10 & \((2100)\)    & \((21100)\)      & \(0\)   & \(D\cap I=\emptyset\)          &                 \(6\) & \(G_1^{(6,8)}(0,0,0,0)\)                     \\
\hline
    e &  11 & \((1110)\)    & \((13000)\)      & \(1\)   &                                &                \(12\) & impossible                    \\
    e &  12 & \((0111)\)    & \((13000)\)      & \(0\)   &                                &                 \(4\) & impossible                    \\
\hline
    e &  13 & \((1210)\)    & \((12100)\)      & \(2\)   &                                &                \(24\) & impossible                    \\
    e &  14 & \((1120)\)    & \((12100)\)      & \(1\)   & \(D\setminus F\subset I\)      &                \(24\) & impossible                    \\
    e &  15 & \((1012)\)    & \((12100)\)      & \(1\)   & \(D\setminus F\not\subset I\)  &                \(24\) & impossible                    \\
    e &  16 & \((0211)\)    & \((12100)\)      & \(1\)   & \(D\cap F=\emptyset\)          &                \(12\) & impossible                    \\
    e &  17 & \((0112)\)    & \((12100)\)      & \(0\)   & \(|D\cap I|=1,\ Z\subset I\)   &                \(24\) & impossible                    \\
    c &  18 & \((2011)\)    & \((12100)\)      & \(0\)   &\(D\cap I=\emptyset,\ Z\subset I\)&              \(12\) & \(G_0^{(m,m+1)}(0,-1,0,1)\) \\
    d &  19 & \((2110)\)    & \((12100)\)      & \(0\)   & \(Z\not\subset I\)             &                \(24\) & \(G_0^{(m,m+e-2)}(1,0,1,0)\) \\
\hline
    e &  20 & \((1230)\)    & \((11110)\)      & \(3\)   & identity with \(0\)            &                 \(4\) & impossible                    \\
    c &  21 & \((1203)\)    & \((11110)\)      & \(2\)   &                                &                \(12\) & \(G_0^{(m,m+1)}(0,0,0,1)\) \\
    e &  22 & \((1023)\)    & \((11110)\)      & \(1\)   & \(Z\subset I\)                 &                \(24\) & impossible                    \\
    d &  23 & \((1320)\)    & \((11110)\)      & \(1\)   & \(Z\not\subset I\)             &                \(12\) & \(G_0^{(m,m+e-2)}(1,0,0,0)\) \\
    e &  24 & \((0123)\)    & \((11110)\)      & \(0\)   & \(4\)-cycle with \(0\)         &                \(24\) & impossible                    \\
    d &  25 & \((0321)\)    & \((11110)\)      & \(0\)   & \(2\) disj. transp. with \(0\) &                \(12\) & \(G_0^{(m,m+e-2)}(0,1,0,0)\) \\
    e &  26 & \((2310)\)    & \((11110)\)      & \(0\)   & \(Z\not\subset I\)             &                 \(8\) & impossible                    \\
\hline
      &     &               &                  &         &                  Total number: & \(369=625-256\)       &                                \\
\hline
\end{tabular}
\end{center}
\end{table}

\noindent
The groups with transfer type \(\mathrm{a}.2\) (resp. \(\mathrm{a}.3\))
form families,
starting with the sporadic stem groups
\(G_0^{(4)}(0,1)=\Phi_3(211)_a\)
(resp. \(G_0^{(4)}(1,0)=\Phi_3(211)_{b_1}=\mathrm{Syl}_3\mathrm{A}_9\)
and \(G_0^{(4)}(-1,0)=\Phi_3(211)_{b_2}\)),
and continuing with the stem groups
\(G_0^{(5)}(0,1)=\Phi_9(2111)_a\)
(resp. \(G_0^{(5)}(1,0)=\Phi_9(2111)_{b_0}\))
and
\(G_0^{(6)}(0,1)=\Phi_{35}(21111)_a\)
(resp. \(G_0^{(6)}(1,0)=\Phi_{35}(21111)_{b_0}\)
and \(G_0^{(6)}(-1,0)=\Phi_{35}(21111)_{b_1}\))
on the periodic part of the coclass graph \(\mathcal{G}(3,1)\).
Transfer type \(\mathrm{a}.1\) is additionally realised by the groups
\(C(3)\times C(3)\),
\(G_0^{(m)}(0,0)\) with \(m\ge 3\),
and \(G_1^{(m)}(0,0)\) with \(m\ge 5\).
The smallest groups with \(k=1\) are
\(G_1^{(5)}(0,0)=\Phi_{10}(1^5)\),
\(G_1^{(5)}(0,1)=\Phi_{10}(2111)_{a_0}\), and
\(G_1^{(5)}(0,-1)=\Phi_{10}(2111)_{a_1}\).
The index of nilpotency for groups of section \(\mathrm{c}\) is \(m\ge 4\).
The smallest members of these two families are the stem groups
\(G_0^{(4,5)}(0,-1,0,1)=\Phi_6(221)_{d_1}\) and
\(G_0^{(4,5)}(0,0,0,1)=\Phi_6(221)_{c_1}\).
For groups of section \(\mathrm{d}\) we have \(m\ge 5\) and \(e\ge 3\).
The group given in section \(\mathrm{b}\)
occurs as second \(3\)-class group of real quadratic fields,
but it is only a special case
of a family with two infinitely increasing parameters \(m\ge 4\), \(e\ge 3\),
and \(0\le k\le 1\).
The smallest member is \(G_0^{(4,5)}(0,0,0,0)=\Phi_6(1^5)\).

%--------------------------------------------------------------------------------
\newpage

%--------------------------------------------------------------------------------
\end{document}